\newtheorem{theorem}{Theorem}[section]
\newtheorem{proposition}[theorem]{Proposition}
\newtheorem{lemma}[theorem]{Lemma}
\newtheorem{corollary}[theorem]{Corollary}
\newtheorem{remark}[theorem]{Remark}
\newtheorem{definition}[theorem]{Definition}
\def\mcD{\mathcal{D}}
\def\mcE{\mathcal{E}}
\def\mcF{\mathcal{F}}
\numberwithin{equation}{section}
\begin{document}
\title[self-similar Dirichlet form on pillow-type carpets]{self-similar Dirichlet form on pillow-type carpets--a short analytic construction}

\author{Shiping Cao}
\address{Department of Mathematics, The Chinese University of Hong Kong, Shatin, Hong Kong}
\email{spcao@math.cuhk.edu.hk}

\author{Hua Qiu}
\address{School of Mathematics, Nanjing University, Nanjing, 210093, P. R. China.}
\email{huaqiu@nju.edu.cn}

\author{Yizhou Wang}
\address{School of Mathematics, Nanjing University, Nanjing, 210093, P. R. China.}
\email{652022210012@smail.nju.edu.cn}

\subjclass[2010]{Primary 28A80, 31E05}
\date{}

\keywords{Sierpiński carpet, pillow-type carpets, Dirichlet forms, self-similar energies}

\thanks{The second author was supported by the National Natural Science Foundation of China (Grant No. 12471087
and 12531004).}

\begin{abstract}
We give a short, self-contained analytic proof of the existence of self-similar Dirichlet forms on pillow-type carpets, a family of infinitely ramified fractals that includes the Sierpiński carpet.   
\end{abstract}

\maketitle
\section{Introduction}
The purpose of this note is to present a short yet self-contained analytic proof of the existence of self-similar Dirichlet forms on a family of infinitely ramified fractals\textendash called pillow-type carpets\textendash\textbf{with Hausdorff dimension strictly less than 2}, which includes the classical Sierpi\'nski carpet. 

We collect some representative works on the construction of self-similar Dirichlet forms, as well as $p$-energy forms (a generalization of Dirichlet forms corresponding to $p=2$) on self-similar fractals: for post-critically finite (p.c.f.) fractals such as the Sierpi\'nski gasket, initiated by  Barlow-Perkins \cite{BP}, Kigami \cite{Kig89, Kig93}, Lindstrøm \cite{Lin}, Sabot \cite{Sab}, and Cao-Gu-Qiu \cite{CGQ}; for non-p.c.f. fractals via probabilistic approaches, pioneered by Barlow-Bass \cite{BB89, BB99}  and Kusuoka-Zhou \cite{KZ}; and for non-p.c.f. fractals via analytic approaches, recently developed by Cao-Qiu \cite{CQ}, Kigami \cite{Kig23}, Shimizu \cite{Shi}, and Murugan-Shimizu \cite{MS}. In the non-p.c.f. setting,  probabilistic approaches based on the elliptic Harnack inequality generally require strong local symmetry of the fractal, while  analytic approaches relying on Poincar\'e inequality and the Loewner condition require a suitable low-dimensional assumption. The Sierpi\'nski carpet satisfies both, making both approaches applicable. In contrast, for p.c.f. fractals, the existence problem for self-similar Dirichlet forms is typically reduced to finding specific nonnegative eigenvectors of a nonlinear Perron-Frobenius operator in a finite-dimensional space, and thus remains largely independent of the geometric inequalities central to the non-p.c.f. setting.

The analytic approach taken here follows a natural transition from the Sierpi\'nski gasket to the Sierpi\'nski carpet. We consider an  increasing sequence of graphs approximating the fractal and construct a \textbf{compatible sequence} of resistance forms along this sequence in the sense of Kigami \cite[Definition 2.2.1]{Kig01}, which ultimately yields the desired Dirichlet form on the fractal. Compared with the existing literature, this note is structured to be as accessible as possible, offering a streamlined exposition for readers primarily interested in the Sierpi\'nski carpet. The framework  used to illustrate our method\textendash the pillow-type carpets\textendash is adapted from the pillow space in \cite{DE} and will be recalled in the next section.\medskip

Throughout this note we adopt the following conventions:
\begin{enumerate}[(1)]
    \item $\mathbb{Z}_+$ denotes the set of nonnegative integers. 
    \item $|x - y|$ denotes the Euclidean distance between points $x,y\in \mathbb{R}^2$.
\end{enumerate}

\section{Pillow-type carpets}\label{s:pillow-type space}

In this section, we construct a pillow-type carpet as self-similar metric space $(F,d_{F})$ and its approaching graph sequence $V_n$. As an analogue of the celebrated Laakso space in \cite{Laa}, the pillow space was first introduced in \cite{DE}, where it was proved that once there exists a distance $\theta$ quasi-symmetric with the original distance and an Ahlfors regular measure $\mu$ such that the pillow space equipped with $(\theta,\mu)$ satisfies Poincar\'e inequality, then there is a metric measure space with analytic dimension $1$ homeomorphism to $\mathbb{R}^2$, which answers a question proposed in \cite{KS}. Other disussion on such Laakso-type spaces can be found in \cite{AES, Bar04, Mur24b}. The construction of pillow-type carpet follows by combining the construction of pillow space and Sierpi\'nski carpet together, where we will simultaneously remove small square and pile multiple small squares with same planar projection in each level.

Fix an integer $L_{F} \geq 3$ and let $Q_0:=[0,1]^2$ be the unit square in $\mathbb{R}^2$. For each $n\geq 0$, tile $Q_0$ into $L_{F}^{2n}$ subsquares of length $L_{F}^{-n}$ by
\begin{equation}\label{eq:tiling of unit square}
    \widehat{\mathcal Q}_n := \big\{[(l_1 - 1)L_{F}^{-n},l_1L_{F}^{-n}]\times [(l_2 - 1)L_{F}^{-n},l_2L_{F}^{-n}]\subset Q_0:l_i\in\mathbb{Z},i = 1,2\big\}.
\end{equation}
For each square $Q = [a_1,a_1 + r]\times [a_2,a_2 + r]\subset \mathbb{R}^2$ with $(a_1,a_2)\in\mathbb{R}^2$ and $r > 0$, we assign the orientation preserving affine map $\widehat{\Psi}_{Q}:Q_0\to Q$ by $\widehat{\Psi}_{Q}(x_1,x_2) := (rx_1 + a_1,rx_2 + a_2)$. 

Define a family of integer-valued maps $\mathscr N_0:= \big\{\nu:\widehat{\mathcal Q}_1\to \mathbb{Z}_+\big\}$ and write $\nu_{Q}:= \nu(Q)$ for each $\nu\in\mathscr N_0$ and $Q\in \widehat{\mathcal Q}_1$. For each $\nu\in \mathscr N_0$, we write the \emph{non-vanish pattern of $\nu$} by
\begin{equation}\label{eq:non-vanish pattern}
    \widehat{\mathcal Q}_{1}^{\nu}:= \big\{ Q\in\widehat{\mathcal Q}_1:\nu_{Q} \geq 1\big\}
\end{equation}
and the \emph{$1$-norm of $\nu$} by
\begin{equation}\label{eq:1-norm}
    \|\nu\| := \sum_{Q\in\widehat{\mathcal Q}_1}\nu_{Q}.
\end{equation}

\begin{definition}\label{d:admissible piling multiplicity}
    A map $\nu\in\mathscr N_0$ is called an \emph{admissible piling multiplicity}, if
    \begin{enumerate}[$(\mathrm{PC}1)$]
    \item (\emph{Symmetry}) $\nu\circ \widehat{g} = \nu$ for all isometries $\widehat{g}:Q_0\to Q_0$.
    \item (\emph{Connectedness}) $\bigcup_{Q\in\widehat{\mathcal Q}^{\nu}_1}Q$ is connected.
    \item (\emph{Flat border}) $\nu_{Q} = 1$ for all $Q\in\widehat{\mathcal Q}_1$ satisfying $Q\cap \partial Q_0 \neq\emptyset$.
    \item (\emph{Low-dimension}) The $1$-norm of $\nu$ satisfies $\|\nu\| \leq L_{F}^2 - 1$.
    \end{enumerate}
    We denote the family of all admissible piling multiplicities as $\mathscr N_{\mathrm{p}}^{\mathrm{low}}$.
\end{definition}

In this paper, we always fix an admissible piling multiplicity $\nu\in\mathscr N^{\mathrm{low}}_{\mathrm p}$. The pillow-type carpet $F$ determined by $\nu$ will be defined as an inverse limit of pre-pillow-type carpets
\begin{equation}\label{eq:inverse limit}
    F_0\stackrel{\pi_1}{\leftarrow} F_1\stackrel{\pi_2}{\leftarrow}  F_2\stackrel{\pi_3}{\leftarrow}  \cdots \stackrel{\pi_{n}}{\leftarrow}  F_{n}\stackrel{\pi_{n + 1}}{\leftarrow}  \cdots,
\end{equation}
where each $\pi_n:F_{n}\to F_{n - 1}$ is an $1$-Lipschitz map. Readers unfamiliar with pillow space may try the specific case $L_{F} = 3$ and $\nu_{Q} = 2$ for the center sequare $Q\in \widehat{\mathcal Q}_1$ at first (ignore the condition $\mathrm{(PC4)}$). Moreover, we will see that the pillow-type carpet $F$ degenerates to planar Sierpi\'nski carpet when $\nu_{Q} \leq 1$ for all $Q\in \widehat{\mathcal Q}_1$, in which case condition $(\mathrm{PC4})$ is equivalent to $\widehat{\mathcal Q}_1^{\nu}\neq \widehat{\mathcal Q}_{1}$.\vspace{.2cm}

\noindent \textbf{Symbol systems.} Introduce two \emph{alphabet sets} $S,\widehat{S}$ with respect to $\nu$ by
\[S := \big\{(Q,j)\in\widehat{\mathcal Q}^{\nu}_1\times \mathbb{Z}_+:0 \leq j \leq \nu_{Q} - 1\big\}\quad\textit{and}\quad \widehat{S} := \widehat{\mathcal Q}_{1}^{\nu},\]
and let $W_n := S^n,\widehat{W}_n := \widehat{S}^{n}$ be the \emph{set of $n$-length words} for each $n\in \mathbb{Z}_+$, where we write $W_{0} = \widehat{W}_0 = \{\emptyset\}$. It is worth noting that by $(\mathrm{PC4})$,
\begin{equation}\label{eq:counting of alphabet set}
    \# S = \|\nu\| \leq L_{F}^2 - 1.
\end{equation}
Moreover, for each $w = w_1w_2\cdots w_n\
\in W_n$ with $w_i = (Q_{w_i},j_{w_i})\in S$, we define a projection $\widehat{w}\in \widehat{W}_{n}$ by $\widehat{w} = \widehat{w}_1\widehat{w}_2\cdots \widehat{w}_n$ with $\widehat{w}_i = Q_{w_i}\in \widehat{\mathcal Q}^{\nu}_1$. For each $\widehat{w} = \widehat{w}_1\widehat{w}_2\cdots \widehat{w}_{n}\in \widehat{W}_n$, assign a square $Q_{\widehat{w}}\in \widehat{\mathcal Q}_{n}$ by
\begin{equation}
    Q_{\widehat{w}} := \widehat{\Psi}_{\widehat{w}_1}\circ\widehat{\Psi}_{\widehat{w}_2}\circ\cdots\circ \widehat{\Psi}_{\widehat{w}_n}(Q_0).
\end{equation}

\noindent\textbf{Pre-pillow-type carpets.} For each $n\in \mathbb{Z}_+$, take a collection of squares $\mathcal Q_n :=\{Q_{w}\}_{w\in W_n}$ such that each $Q_{w}$ is isometric to $Q_{\widehat{w}}$ for $\widehat{w}\in \widehat{W}_n$ by an isometry $i_{w}:Q_{w}\to Q_{\widehat{w}}$. For each $w = w_1w_2\cdots w_n\in W_n$, take $w^{-} := w_1w_2\cdots w_{n-1}\in W_{n - 1}$ and assign an isometry embedding $\pi_{n,w}:Q_{w}\to Q_{w^{-}}$ by
\begin{equation}
    \pi_{n,w} := i_{w^{-}}^{-1}\circ i_{w}.
\end{equation}

Now we define an equivalence relation $\mathcal R_n$ on the disjoint union $\bigsqcup_{w\in W_n}Q_{w}$ by induction on $n$. For $x,y\in \bigsqcup_{w\in W_0}Q_{w}$, we define $x\mathcal R_0y$ if and only if $x = y$. Assume $\mathcal R_{n - 1}$ on $\bigsqcup_{w\in W_{n-1}}Q_{w}$ has been defined. Then for each $x,y\in \bigsqcup_{w\in W_n}Q_{w}$ with $x\in Q_{w},y\in Q_{v}$ for $w,v\in W_n$, we define $x\mathcal R_{n}y$ if and only if at least one of the following cases holds.
\begin{enumerate}[(1)]
    \item $w^{-}\neq v^{-}$ and $\pi_{n,w}(x)\mathcal R_{n - 1}\pi_{n,v}(y)$.
    \item $w^{-} = v^{-}$ and $i_{w}(x) = i_{v}(y)\in \widehat{\Psi}_{\widehat{w}}(\partial Q_0)$.
    \item $x = y$.
\end{enumerate}

Now we define the \emph{$n$-level pre-pillow-type carpet} $F_n$ for each $n\in \mathbb Z_+$ as a quotient space
\[F_n := \big(\bigsqcup_{w\in W_{n}}Q_{w}\big)/\mathcal R_{n}.\]
Then each $Q_{w}\in \mathcal Q_{n}$ is naturally embedded as a subset of $F_n$ by the corresponding quotient map. Moreover, the metric $d_{F_n}$ on $F_n$ between $x,y\in F_n$ is defined by
\[d_{F_n}(x,y) := \inf\bigg\{\sum_{i = 1}^{N}|x_{i} - x_{i-1}|:\{x_i\}_{i = 0}^{N}\in \mathcal C_{n}(x,y)\bigg\},\]
where $\mathcal C_{n}(x,y)$ is the collection of all chains $\{x_i\}_{i = 0}^{N}$ satisfying $x_0 = x,x_{N} = y$ and for each $1 \leq i \leq N$, there exists $w\in W_{n}$ such that $x_{i},x_{i-1}\in Q_{w}$.

The $1$-Lipschitz maps $\pi_n:F_{n}\to F_{n-1}$ in \eqref{eq:inverse limit} is defined by combining all $\pi_{n,w}:Q_w\to Q_{w^{-}}$ together, that is
\[\pi_{n}(x) := \pi_{n,w}(x)\quad\textit{for all }w\in W_{n},x\in Q_{w}.\]
Moreover, the $1$-Lipschitz property of $\pi_n$ follows by that for all $x,y\in F_n$, it holds
\begin{equation}
    \pi_{n}(\mathcal C_n(x,y))\subset \mathcal C_{n - 1}(\pi_n(x),\pi_n(y))
\end{equation}
where $\pi_n(\mathcal C_n(x,y))$ is the image of all chains in $\mathcal C_n(x,y)$ under map $\pi_n$.

\begin{definition}\label{d:pillow-type carpet}
    Let $\{F_n\}_{n = 0}^{\infty}$ and $\{\pi_n\}_{n = 1}^{\infty}$ in \eqref{eq:inverse limit} as constructed above for a fixed admissible piling multiplicity $\nu\in\mathscr N_{\mathrm{p}}^{\mathrm{low}}$. Then we define the inverse limit metric space $(F,d_{F})$ of \eqref{eq:inverse limit} by
    \begin{equation}\label{eq:set of pillow-type carpet}
        F:= \big\{\{x_n\}_{n = 0}^{\infty}\in \prod_{n = 0}^{\infty}F_n:\pi_n(x_n) = x_{n - 1},n\geq 1\big\},
    \end{equation}
    and for each $\{x_n\}_{n = 0}^{\infty},\{y_n\}_{n = 0}^{\infty}\in F$,
    \begin{equation}\label{eq:distance of pillow-type carpet}
        d_{F}(\{x_n\}_{n = 0}^{\infty},\{y_n\}_{n = 0}^{\infty}) := \lim_{n\to\infty}d_{F_n}(x_n,y_n).
    \end{equation}
    The metric space $(F,d_{F})$ is said to be the \emph{pillow-type carpet of multiplicity $\nu$}.
\end{definition}

\noindent \textbf{Self-similarity of pillow-type carpets.} For each $n\in \mathbb{Z}_+$, we assign an $1$-Lipschitz projection $\pi_{\infty,n}:F\to F_n$ by $\pi_{\infty,n}(\{x_{m}\}_{m = 0}^{\infty}) = x_n$. Then each $w\in W_n$ corresponds to a \emph{$w$-cell} $F_{w} := \pi^{-1}_{\infty,n}(Q_{w})\subset F$. For each $x\in F$, there exists a sequence $F_{w^{(n)}}$ for $w^{(n)}\in W_n$ such that $\{x\} = \bigcap_{n = 0}^{\infty}F_{w^{(n)}}$. In particular, we write the projection from $F$ to $Q_0\subset \mathbb{R}^2$ by
\begin{equation}\label{eq:projection from carpet to plane}
    i_{\emptyset}\circ\pi_{\infty,0}(x) =:\widehat{\pi}(x) = (\widehat{\pi}_1(x),\widehat{\pi}_2(x)).
\end{equation}

To illustrate the self-similar structure of $F$, for each $w\in W_1$, we define a map $\Psi_{w}:F\to F$ by $\Psi_w(x)=y$ if and only if there exists a sequence $w^{(n)}\in W_n$ such that
\[\{x\} = \bigcap_{n = 0}^{\infty}F_{w^{(n)}}\quad\textit{and}\quad \{y\} = \bigcap _{n = 0}^{\infty}F_{ww^{(n)}},\]
where $ww^{(n)}\in W_{n + 1}$ is the conjunction of words $w$ and $w^{(n)}$. Then $F$ satisfies the self-similarity,
\[F = \bigcup_{w\in W_1}F_{w} = \bigcup_{w\in W_1} \Psi_{w}(F).\]
For general $w=w_1w_2\cdots w_n\in W_n$, write $\Psi_{w} := \Psi_{w_1}\circ \Psi_{w_2}\circ\cdots\circ \Psi_{w_n}$ as convention.\vspace{.2cm}

\noindent \textbf{Planar symmetry of pillow-type carpets.} By condition $\mathrm{(PC1)}$, each isometry $\widehat{g}:Q_0\to Q_0$ induces a map $\widehat{\tau}_{n}:\widehat{W}_n\to \widehat{W}_n$ such that $Q_{\widehat{\tau}_n(\widehat w)} = \widehat{g}(Q_{\widehat w})$ for all $\widehat{w}\in \widehat{W}_n,n\in\mathbb Z_+$, which further induces $\tau_n:W_n\to W_n$ such that for each $w = w_1w_2\cdots w_n$ with $w_i = (\widehat{w}_i,j_{w_i})$, it holds that $\tau_n(w) =: v= v_1v_2\cdots v_n$ with $\widehat{v} = \widehat{\tau}_n(\widehat{w})$ and $j_{v_i} = j_{w_i}$ for $1 \leq i \leq n$. Then we define a map $g:F\to F$ induced by $\widehat{g}$ such that for any $x\in F$, there exists a sequence $w_n\in W_n$ satisfying
\[\{x\} = \bigcap _{n = 0}^{\infty}F_{w_n}\quad\textit{and}\quad \{g(x)\} = \bigcap _{n = 0}^{\infty}F_{\tau_n(w_n)}.\]
The group of isometries $g:F\to F$ induced in this way is called the \emph{planar symmetry} of $F$, which always keeps the piling of $F$ unchanged. We will only exploit the planar symmetry of $F$ in this note.\vspace{.2cm}

Let $\widehat{p}_0 := (0,0),\widehat{p}_1 := (1,0),\widehat{p}_2 := (1,1),\widehat{p}_3 := (0,1)$ be the vertices of $Q_0$. Since $\#\widehat{\pi}^{-1}(\widehat{p}_i) = 1$, we define $p_i\in F$ such that $\widehat{\pi}^{-1}(\widehat{p}_i) = \{p_i\}$ for each $i\in\{0,1,2,3\}$.

\begin{definition}
    Define $V_0 := \{p_i\}_{i = 0}^{3}\subset F$ and say $x,y\in V_0$ are adjacent if and only if $d_{F}(x,y) = 1$. For each $n\in \mathbb{Z}_+$, define the \emph{$n$-level approaching graph} of $F$ by
    \begin{equation}
        V_n:= \bigcup_{w\in W_n}\Psi_{w}(V_0)\subset F,
    \end{equation}
    and for $x,y\in V_n$, write $x\stackrel{n}{\sim}y$ if and only if there exists adjacent $x',y'\in V_0$ and $w\in W_n$ such that $x = \Psi_w(x')$ and $y = \Psi_{w}(y')$.
\end{definition}

Each $V_{n}$ is also called the \emph{$n$-level $0$-skeleton} of $F$. Combining condition $\mathrm{(PC2)}$ and $\mathrm{(PC3)}$, each graph $V_{n}$ is connected. Moreover, the following uniformly locally finite property of sequence $V_n$ is mainly attributed to condition $\mathrm{(PC3)}$.

\begin{lemma}
    There is a constant $c_{\mathrm{ulf}} \geq 1$ only depending on $L_{F}$ such that
    \begin{equation}\label{eq:metric doubling property}
        \#\big\{w\in W_m:x\in F_{w}\big\}\leq c_{\mathrm{ulf}}\quad\textit{for all }m\geq 0,x\in F,
    \end{equation}
    \begin{equation}\label{eq:locally finite property}
        \#\big\{y\in V_m:x\stackrel{m}{\sim}y\big\} \leq c_{\mathrm{ulf}}\quad\textit{for all }m\geq0,x\in V_m.
    \end{equation}
\end{lemma}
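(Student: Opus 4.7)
The plan is to establish \eqref{eq:metric doubling property} via a planar-projection count that collapses under the flat border condition $(\mathrm{PC3})$, and then deduce \eqref{eq:locally finite property} as a corollary.

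For \eqref{eq:metric doubling property}, fix $x\in F$ and set $\widehat p:=\widehat\pi(x)\in Q_0$. Since $\widehat{\mathcal Q}_m$ is a regular grid of $L_F^{-m}$-squares, $\widehat p$ lies in at most four squares of $\widehat{\mathcal Q}_m$. Partition $\mathcal W_m(x):=\{w\in W_m:x\in F_w\}$ by the planar projection $w\mapsto \widehat w\in\widehat W_m$ into fibers $T_{\widehat w}(x)$; it suffices to bound each $|T_{\widehat w}(x)|$ by a constant depending only on $L_F$.

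To estimate $|T_{\widehat w}(x)|$, I unfold the equivalence relation $\mathcal R_m$ level by level. Fix $\widehat w$ with $\widehat p\in Q_{\widehat w}$ and consider the siblings $v\cdot(\widehat w_m,j)\in W_m$, $0\leq j<\nu_{\widehat w_m}$, sharing a common parent $v\in W_{m-1}$ and common planar symbol $\widehat w_m$. Case $(2)$ of $\mathcal R_m$ identifies these $\nu_{\widehat w_m}$ siblings at $x$ precisely when $\widehat p\in\partial Q_{\widehat w}$; otherwise only the single stack actually containing $x$ is realized. A separate check using the fact that $|\mathcal W_{m-1}(x)|\geq 2$ combined with $\widehat p$ lying in $\mathrm{int}\, Q_{\widehat w}$ is impossible under $(\mathrm{PC3})$ rules out any additional contribution from case $(1)$. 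Iterating over parents $v\in T_{\widehat w^{-}}(x)$ then yields the multiplicative identity
\[|T_{\widehat w}(x)|=\prod_{i\in B_{\widehat w}}\nu_{\widehat w_i},\qquad B_{\widehat w}:=\{1\leq i\leq m:\widehat p\in\partial Q_{\widehat w^{(i)}}\},\]
where $\widehat w^{(i)}$ denotes the length-$i$ prefix of $\widehat w$.

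The flat border condition $(\mathrm{PC3})$ now collapses this product. If $i^*:=\min B_{\widehat w}$ exists, I claim $\widehat p\in\partial Q_{\widehat w^{(i)}}$ and $\nu_{\widehat w_i}=1$ for every $i>i^*$. Indeed, once $\widehat p\in\partial Q_{\widehat w^{(i^*)}}$, the sub-square $Q_{\widehat w^{(i^*+1)}}\subset Q_{\widehat w^{(i^*)}}$ containing $\widehat p$ must touch $\partial Q_{\widehat w^{(i^*)}}$, so it is a boundary sub-square in the level-$1$ pattern applied self-similarly inside $Q_{\widehat w^{(i^*)}}$; $(\mathrm{PC3})$ forces $\nu_{\widehat w_{i^*+1}}=1$, while simultaneously $\widehat p\in\partial Q_{\widehat w^{(i^*)}}\cap Q_{\widehat w^{(i^*+1)}}\subset\partial Q_{\widehat w^{(i^*+1)}}$, allowing induction. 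Therefore $|T_{\widehat w}(x)|\leq \nu_{\widehat w_{i^*}}\leq \|\nu\|\leq L_F^2-1$, and summing over the at most four admissible $\widehat w$ yields \eqref{eq:metric doubling property} with $c_{\mathrm{ulf}}=4(L_F^2-1)$.

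The bound \eqref{eq:locally finite property} is then routine. For $x\in V_m$, each $y\in V_m$ with $x\sm y$ comes from some $w\in W_m$ and adjacent $x',y'\in V_0$ with $x=\Psi_w(x')$ and $y=\Psi_w(y')$; since $\Psi_w(V_0)\subset F_w$, the word $w$ lies in $\mathcal W_m(x)$ (bounded by the previous step), and $x'\in V_0$ has only two $V_0$-neighbors. Doubling $c_{\mathrm{ulf}}$ covers both statements.

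The main obstacle is the careful unfolding of $\mathcal R_m$ in the middle paragraph: one must rule out case $(1)$ contributions beyond the naive per-parent count, and then execute the self-similar iteration of $(\mathrm{PC3})$ as an explicit induction, since $(\mathrm{PC3})$ is stated only for the top-level pattern $\widehat{\mathcal Q}_1$ in $Q_0$.
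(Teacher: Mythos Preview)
Your argument is correct and rests on the same core observation as the paper: once $\widehat p$ first lands on a cell boundary at level $i^*$, condition $(\mathrm{PC3})$ forces all subsequent multiplicities $\nu_{\widehat w_i}$ to equal $1$, so the cell count through $x$ is controlled by a single factor $\nu_{\widehat w_{i^*}}\leq\|\nu\|$. The paper packages this differently: rather than fibering over the planar projection and writing the product $\prod_{i\in B_{\widehat w}}\nu_{\widehat w_i}$, it introduces the $1$-skeleta $S_n$, identifies the critical level $n$ with $x\in S_n\setminus S_{n-1}$, and factors the count as $N_{n,n}\cdot N_{m-n,0}\leq \|\nu\|\cdot 2$, where the first factor is exactly your branching at level $i^*=n$ and the second bounds the post-critical behavior by pulling $x$ back to $S_0$ inside each level-$n$ cell and invoking $(\mathrm{PC3})$ there. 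Your explicit product formula makes the layered structure of $\mathcal R_m$ more transparent, at the cost of the careful unfolding you correctly flag as the main obstacle; the paper's skeleton bookkeeping is terser and sidesteps that unfolding entirely. One minor correction: the impossibility you cite, namely $|\mathcal W_{m-1}(x)|\geq 2$ together with $\widehat p\in\mathrm{int}\,Q_{\widehat w}$, does not actually require $(\mathrm{PC3})$; it already follows from the nesting $\mathrm{int}\,Q_{\widehat w}\subset\mathrm{int}\,Q_{\widehat w^{(k)}}$ for all $k\leq m$, which forces $\widehat p$ into a unique planar square at every coarser scale with no piling.
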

\begin{proof}
    Notice that for each $x\in V_{m},m\geq 0$, we have 
    \begin{equation}
        \#\big\{y\in V_m:x\stackrel{m}{\sim}y\big\} \leq 4\#\big\{w\in W_m:x\in F_{w}\big\},
    \end{equation}
    which reduces the proof of \eqref{eq:locally finite property} to \eqref{eq:metric doubling property}. Define the $n$-level $1$-skeleton of $F$ by
    \[S_n := \bigcup_{Q\in \widehat{\mathcal Q}_n}\widehat{\pi}^{-1}(\widehat{\Psi}_{Q}(\partial Q_0))\subset F,\]
    and denote $S_{-1} := \emptyset$. If $x\in F\setminus S_m$, then \eqref{eq:metric doubling property} holds with right hand side taking $1$. It remains to prove \eqref{eq:metric doubling property} for $x\in S_m$. 
    
    To this end, for each $0 \leq n \leq m$, we let 
    \[
    N_{m,n}=\sup_{x\in S_{n}\setminus S_{n - 1}}\#\big\{w\in W_m:x\in F_{w}\big\}
    \]
    By condition $\mathrm{(PC3)}$, $N_{m,0} \leq 2$ for all $m\geq 0$. If $x\in S_{m}\setminus S_{m-1}$, then there exists a unique $v\in W_{m - 1}$ such that $x\in F_{v}$, which implies $N_{m,m} = N_{1,1} \leq \|\nu\|$ by pulling back $F_{v}$ to $F$ with $\Psi_{v}^{-1}$. More generally, for each $0 \leq n \leq m$ and $x\in S_{n}\setminus S_{n - 1}$,
    \[
    \#\{w\in W_{m}:x\in F_{w}\} \leq N_{m - n,0}\cdot\#\{w\in W_{n}:x\in F_{w}\} \leq N_{m - n,0}N_{n,n} \leq 2\|\nu\|,\]
    which implies the desired estimate \eqref{eq:metric doubling property} by $(\mathrm{PC}4)$.
\end{proof}

\section{Resistance estimate on graph sequence}
For each $n\geq 0$, we write $l(V_n) := \mathbb{R}^{V_n}$ as the family of all functions with domain $V_n$. A functional $D:l(V_n)\times l(V_n)\to \mathbb{R}$ is said to be a \emph{graph energy} on $V_n$, if there exists $c:V_n\times V_n\to [0,\infty)$ such that
\[D(f,g) = \sum_{x\stackrel{n}{\sim}y}c(x,y)\big(f(x) - f(y)\big)\big(g(x) - g(y)\big),\]
where in the summation, we omit the requirement $x,y\in V_n$ for short. Also, we write $D(f) := D(f,f)$. For each $0 \leq n \leq m$ and graph energy $D$ on $l(V_m)$, we denote the trace of $D$ on $V_n$ as
\begin{equation}
    [D]_{V_n}(f) := \min\big\{D(g):g\in l(V_m),g|_{V_n} = f\big\}
\end{equation}
for all $f\in l(V_n)$. Each functional $[D]_{V_n}$ gives a graph energy on $V_n$ by polarization.

For each $n\geq 0$, define the \emph{natural graph energy} on $V_n$ by
\begin{equation}\label{eq:graph energy}
    \mathcal D_n(f,g) := \sum_{x\stackrel{n}{\sim}y}\big(f(x) - f(y)\big)\big(g(x) - g(y)\big)\quad\textit{for }f,g\in l(V_n).
\end{equation}
For each $0 \leq n \leq m$, decomposing $V_{m}$ into multiple subgraphs isometric to $V_{m - n}$ with respect to graph distance each enveloped by $n$-level skeleton of $F$, we have the following \emph{$(m,n)$-reduction of natural graph energy} 
\begin{equation}\label{eq:reduction of graph energy}
    c_{\mathrm{ulf}}^{-1}\sum_{w\in W_{n}}\mathcal D_{m - n}(f\circ \Psi_{w}) \leq \mathcal D_{m}(f) \leq \sum_{w\in W_{n}}\mathcal D_{m - n}(f\circ \Psi_{w}),
\end{equation}
where the left hands follows by that each pair of adjacent $x,y\in V_{m}$ is at most occupied by $c_{\mathrm{ulf}}$ many $F_{w},w\in W_{n}$ simultaneously by \eqref{eq:metric doubling property}.

For each disjoint subsets $A,B\subset V_n$, we define the \emph{effective resistance} between $A,B$ by
\begin{equation}\label{eq:resistance of disjoint sets}
    R_n(A,B) := \big(\min\big\{\mathcal D_n(f):f\in l(V_n),f|_{A} \equiv 1,f|_{B}\equiv 0\big\}\big)^{-1}.
\end{equation}
For each distinct $x,y\in V_n$, write $R_n(x,y)$ shortly for $R_n(\{x\},\{y\})$. The goal of this section is to prove the multiplicity estimate for sequence
\begin{equation}\label{eq:resistance between vertices}
    \mathscr R_{n} := R_n(p_0,p_1).
\end{equation}
Notice that by the planar symmetry of $F$, the pair $p_0,p_1$ in \eqref{eq:resistance between vertices} can be replaced by any other adjacent pair in $V_0$.

\begin{theorem}\label{t:multiplicity of resistance constant constants}
    There exists a constant $c_{\dagger} \geq 1$ depending on $L_{F}$ so that for all $0 \leq n \leq m$,
    \begin{equation}\label{eq:multiplicity of resistance constants}
        c_{\dagger}^{-1}\mathscr R_{n}\mathscr R_{m - n} \leq \mathscr R_m \leq c_{\dagger}\mathscr R_{n}\mathscr R_{m - n}.
    \end{equation}
\end{theorem}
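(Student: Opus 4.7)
I plan to reduce the theorem to the trace comparison
\begin{equation*}
c_1\,\mathscr{R}_{m-n}^{-1}\mathcal{D}_n(h)\;\leq\;[\mathcal{D}_m]_{V_n}(h)\;\leq\;c_2\,\mathscr{R}_{m-n}^{-1}\mathcal{D}_n(h)\qquad\text{for all }h\in l(V_n),
\end{equation*}
where $V_n$ is embedded in $V_m$ via the corner-cell inclusions $\Psi_w(V_0)\subset V_m$ for $w\in W_n$. Minimizing all three sides over $h$ with $h(p_0)=1,\,h(p_1)=0$ then directly recovers \eqref{eq:multiplicity of resistance constants}, since the central quantity becomes $\mathscr{R}_m^{-1}$ (only the values on $V_n$ are constrained) and the outer ones become $\mathscr{R}_{m-n}^{-1}\mathscr{R}_n^{-1}$.

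The \emph{lower} bound on the trace (equivalently, $\mathscr{R}_m\leq c_\dagger\mathscr{R}_n\mathscr{R}_{m-n}$) is the easier half. For any $f\in l(V_m)$ extending $h$, the left half of \eqref{eq:reduction of graph energy} bounds $\mathcal{D}_m(f)$ from below by $c_{\mathrm{ulf}}^{-1}\sum_{w\in W_n}\mathcal{D}_{m-n}(f\circ\Psi_w)$, and the identity $V_n\cap F_w=\Psi_w(V_0)$ lets one further pass to the $V_0$-trace inside each cell, giving
\begin{equation*}
\mathcal{D}_m(f)\;\geq\;c_{\mathrm{ulf}}^{-1}\sum_{w\in W_n}[\mathcal{D}_{m-n}]_{V_0}\bigl(h\circ\Psi_w|_{V_0}\bigr).
\end{equation*}
By the planar symmetry $(\mathrm{PC}1)$, the trace $[\mathcal{D}_{m-n}]_{V_0}$ is specified by only two positive conductances on the $4$-cycle $V_0$ (edge and diagonal), whose sum is comparable to $\mathscr{R}_{m-n}^{-1}$ via direct minimization against the $(p_0,p_1)$-boundary condition. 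A short polarization then yields $[\mathcal{D}_{m-n}]_{V_0}(a)\geq c\,\mathscr{R}_{m-n}^{-1}\sum_{i\stackrel{0}{\sim}j}(a_i-a_j)^2$; summing over $w\in W_n$ and invoking \eqref{eq:metric doubling property} to bound the edge-covering multiplicity returns $\mathscr{R}_{m-n}^{-1}\mathcal{D}_n(h)$.

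The \emph{upper} bound on the trace (yielding $\mathscr{R}_m\geq c_\dagger^{-1}\mathscr{R}_n\mathscr{R}_{m-n}$) requires an explicit cell-consistent extension of $h$. The naive idea of independently harmonic-extending $h\circ\Psi_w|_{V_0}$ in each $n$-cell clashes along shared boundary edges, because the harmonic values there depend on all four corners of each cell, and the two cells share only two of them. I would instead fix once and for all four \emph{tent} basis functions $\psi_0,\dots,\psi_3\in l(V_{m-n})$ with $\psi_i(p_j)=\delta_{ij}$, \emph{linearly interpolated} along the two boundary edges of $Q_0$ adjacent to $p_i$, \emph{identically zero} along the two opposite boundary edges (made meaningful by $(\mathrm{PC}3)$), and minimum-energy (harmonic) in the interior. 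Two properties then hold by construction: $\sum_i\psi_i\equiv 1$, and $\sum_i a_i\psi_i$ along any boundary edge of $Q_0$ depends only on the two endpoint corner values. Thus $f(\Psi_w(y)):=\sum_{i=0}^{3}h(\Psi_w(p_i))\psi_i(y)$ produces a well-defined $f\in l(V_m)$ with $f|_{V_n}=h$. Recentering $h(\Psi_w(p_i))$ by the cell mean $\bar{h}_w$ (which is free, since the partition of unity makes the shift cancel), applying Cauchy--Schwarz across the four basis functions, and summing with the right half of \eqref{eq:reduction of graph energy} and \eqref{eq:metric doubling property}, one obtains the desired cell-additive bound \emph{provided} one has the individual estimate $\mathcal{D}_{m-n}(\psi_i)\leq C\mathscr{R}_{m-n}^{-1}$. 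This last inequality is where I expect the principal obstacle: the boundary contribution is controlled by the elementary path estimate $\mathscr{R}_{m-n}\leq L_F^{m-n}$, which absorbs the $2/L_F^{m-n}$ energy of the two linear interpolations; but the interior contribution demands comparing $\psi_i$ to the less constrained harmonic extension $\hat\psi_i$ specified only by the four corner values (whose energy is $O(\mathscr{R}_{m-n}^{-1})$ by the $V_0$-trace argument of the previous paragraph), and then using the planar reflection symmetries of $V_{m-n}$ to show that enforcing ``linear on adjacent, zero on opposite'' costs only a universal multiplicative constant independent of $m-n$.
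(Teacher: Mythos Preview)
Your overall architecture---reduce the multiplicativity to a two-sided trace comparison and prove the latter via a cell-wise partition-of-unity extension---is exactly the paper's. Your restriction half (the upper bound $\mathscr R_m\le c\,\mathscr R_n\mathscr R_{m-n}$) is correct and essentially the paper's Proposition~\ref{p:trace theorem}(1): one even has $[\mathcal D_{m-n}]_{V_0}(a)\ge c_e\,\mathcal D_0(a)$ with the ``edge'' conductance $c_e$ of the symmetric trace form, and a short computation on the four-vertex network shows $\mathscr R_{m-n}^{-1}\in[\tfrac43 c_e,\,4c_e]$ regardless of the diagonal conductance, so $c_e\asymp\mathscr R_{m-n}^{-1}$ as you claim.

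The gap is in the extension half. Your tent functions $\psi_i$ are required to be \emph{linear} on the two adjacent boundary edges and zero on the opposite ones, and you need $\mathcal D_{m-n}(\psi_i)\le C\,\mathscr R_{m-n}^{-1}$. You propose to obtain this by comparing $\psi_i$ to the corner-harmonic $\hat\psi_i$ ``using the planar reflection symmetries''; but symmetry alone cannot do this. A clean way to see that some extra input is mandatory: on the \emph{full} square grid (i.e.\ drop $(\mathrm{PC4})$), the harmonic extension with your linear/zero boundary data is the bilinear function $(1-\widehat\pi_1)(1-\widehat\pi_2)$, whose energy is $\asymp 1$ independently of $m-n$, whereas $\mathscr R_{m-n}^{-1}\asymp 1/(m-n)\to 0$. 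So your key inequality fails there, and consequently the supermultiplicative bound $\mathscr R_m\ge c^{-1}\mathscr R_n\mathscr R_{m-n}$ fails too (take $n=m/2$). Since your symmetry argument nowhere uses $(\mathrm{PC4})$, it cannot succeed for the carpets either.

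The paper resolves this not by forcing linear boundary data but by building the $\psi_i$ out of the \emph{edge-to-edge} harmonic function $h$ (the minimizer defining $\overline{\mathscr R}_{m-n}$): setting $\widetilde\psi_0:=(2h)\wedge(2h')\wedge 1$ and normalizing gives basis functions whose boundary values on each edge depend only on that edge's endpoints (so the cell-gluing still works), with energy $\le C\,\overline{\mathscr R}_{m-n}^{-1}$---note the bar. This yields only the \emph{weak} lower bound $\mathscr R_m\ge c^{-1}\mathscr R_n\,\overline{\mathscr R}_{m-n}$ (Corollary~\ref{c:weak multiplicity}). The missing piece $\overline{\mathscr R}_n\asymp\mathscr R_n$ is then proved separately (Proposition~\ref{p:converse estimate between resistance constants}), and \emph{that} is where $(\mathrm{PC4})$ enters: it gives an a~priori exponential lower bound on $\overline{\mathscr R}_n$ (Lemma~\ref{l:a priori lower bound estimate of resistance constants}), hence a uniform H\"older estimate (Corollary~\ref{c:Holder continuity on graph}), which is used to localize the corner-harmonic function near $p_0,p_1$ and run a doubling argument along a strip of cells between them. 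In short, the step you flagged as ``the principal obstacle'' is genuinely an obstacle; it is handled not by symmetry but by replacing your linear boundary data with the trace of the edge-harmonic function and then comparing $\overline{\mathscr R}$ to $\mathscr R$ via the low-dimension hypothesis.
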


We take two steps to complete the proof of Theorem \ref{t:multiplicity of resistance constant constants}.

\subsection{A trace estimate.} For each $0 \leq n \leq m$, we will prove a trace theorem comparing $[\mathcal D_m]_{V_n}$ and $\mathcal D_{n}$, see Proposition \ref{p:trace theorem}. For the extension part of the trace theorem, the resistance between border lines will be used, that is
\begin{equation}\label{eq:resistance between border lines}
    \overline{\mathscr R}_n:= R_{n}(\overline{p_3p_0}\cap V_{n},\overline{p_1p_2}\cap V_n).
\end{equation}
Here $\overline{p_ip_j}\subset F$ is defined by $\overline{p_ip_j} := \widehat{\pi}^{-1}(\overline{\widehat p_i\widehat p_j})$, where for each $x,y\in \mathbb R^2$, $\overline{xy}$ is the line segment with end points $x,y$. By the planar symmetry of $F$, the definition of $\overline{\mathscr R}_n$ is unchanged when taking the other pair of opposite borders.

\begin{proposition}\label{p:trace theorem}
    For each $0 \leq n \leq m$, there exist constants $c_{\mathrm{r}}, c_{\mathrm{e}} \geq 1$ depending on $L_{F}$ such that for all $f\in l(V_n)$,
    \begin{equation}\label{eq:trace theorem}
        c_{\mathrm e}^{-1}\overline{\mathscr R}_{m - n}[\mathcal D_m]_{V_n}(f)\leq \mathcal D_{n}(f) \leq c_{\mathrm r}\mathscr R_{m - n}[\mathcal D_{m}]_{V_n}(f).
    \end{equation}
    Specifically, we have
    \begin{enumerate}[(1)]
        \item For each $f\in l(V_m)$, we have
        \begin{equation}\label{eq:restriction theorem}
            \mathcal D_{n}(f|_{V_n}) \leq c_{\mathrm{r}}\mathscr R_{m - n}\mathcal D_{m}(f).
        \end{equation}
        \item There exists an extension operator $\mathfrak{E}_{n,m}:l(V_{n})\to l(V_{m})$ such that $(\mathfrak{E}_{n,m}f)|_{V_{n}} = f$,
        \begin{equation}\label{eq:local perturbation}
            \min_{V_0}f\circ\Psi_{w} = \min_{V_{m - n}}\mathfrak{E}_{n,m}f\circ \Psi_{w}\quad\textit{and}\quad \max_{V_0}f\circ \Psi_{w} = \max_{V_{m - n}}\mathfrak{E}_{n,m}f\circ \Psi_{w}
        \end{equation}
        for all $w\in W_{n}$, and
        \begin{equation}\label{eq:extension theorem}
            \overline{\mathscr R}_{m - n}\mathcal D_m(\mathfrak{E}_{n,m}f) \leq c_{\mathrm{e}}\mathcal D_{n}(f).
        \end{equation}
    \end{enumerate}
\end{proposition}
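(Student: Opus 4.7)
The plan is to prove the two specialized inequalities \eqref{eq:restriction theorem} and \eqref{eq:extension theorem}; the trace sandwich \eqref{eq:trace theorem} then follows by applying \eqref{eq:restriction theorem} to the $\mathcal D_m$-minimizer extending $f\in l(V_n)$ and by feeding $\mathfrak{E}_{n,m}f$ as a competitor into the definition of the trace.

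For \eqref{eq:restriction theorem}, my plan is to pair every $n$-adjacent pair $x\stackrel{n}{\sim}y$ with a single word $w\in W_n$ such that $x=\Psi_w(p_i)$, $y=\Psi_w(p_j)$ for some adjacent $p_i,p_j\in V_0$, and apply the classical effective-resistance inequality inside the cell $F_w$: $(f(x)-f(y))^2\leq R\cdot\mathcal D_{m-n}(f\circ\Psi_w)$, where $R$ is the corner-to-corner resistance in $F_w$ viewed at level $m-n$. The self-similar identification of $F_w$ with $F$ together with the planar symmetry allows $R$ to be replaced by $\mathscr R_{m-n}$. Summing over all such pairs (each $w$ contributing at most four times) and invoking \eqref{eq:reduction of graph energy} gives \eqref{eq:restriction theorem} with $c_{\mathrm r}$ depending only on $L_F$ and $c_{\mathrm{ulf}}$.

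For \eqref{eq:extension theorem} I would build $\mathfrak{E}_{n,m}$ cell by cell through a single-cell extension operator $\Phi:l(V_0)\to l(V_{m-n})$. Let $u^{\mathrm{LR}},u^{\mathrm{BT}}\in l(V_{m-n})$ be the side-to-side minimizers realizing $\overline{\mathscr R}_{m-n}$, normalized to take values in $[0,1]$; the planar symmetry identifies their restrictions to every border with one common canonical profile $\beta$ running from $0$ to $1$ along the border. Given $\vec a=(a_0,a_1,a_2,a_3)\in l(V_0)$, define $\Phi(\vec a)$ on each border $\overline{p_ip_j}\cap V_{m-n}$ by the $\beta$-interpolation $a_i+(a_j-a_i)\beta$, so that border values depend only on the two adjacent corner values and consistency along shared borders (including piled positions) is automatic from $\mathrm{(PC3)}$ and the identification $(2)$ of piled copies; in the interior, $\Phi(\vec a)$ is the $\mathcal D_{m-n}$-minimizer with these boundary values. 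Property \eqref{eq:local perturbation} then follows from the max principle for graph-harmonic minimizers together with $\beta\in[0,1]$.

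The decisive estimate is $\mathcal D_{m-n}(\Phi\vec a)\leq C\,\overline{\mathscr R}_{m-n}^{-1}\mathcal D_0(\vec a)$, which I plan to establish using the explicit test function
\[T=a_0+(a_1-a_0)u^{\mathrm{LR}}+(a_3-a_0)u^{\mathrm{BT}}+(a_0+a_2-a_1-a_3)u^{\mathrm{LR}}u^{\mathrm{BT}}.\]
A direct check on each border confirms that $T$ has exactly the $\beta$-interpolated boundary values matching $\Phi(\vec a)$. The product-rule bound $\mathcal D_{m-n}(u^{\mathrm{LR}}u^{\mathrm{BT}})\leq 4/\overline{\mathscr R}_{m-n}$, the triangle inequality for $\sqrt{\mathcal D_{m-n}(\cdot)}$, and the elementary coefficient estimate $(a_1-a_0)^2+(a_3-a_0)^2+(a_0+a_2-a_1-a_3)^2\leq 3\mathcal D_0(\vec a)$ together control $\mathcal D_{m-n}(T)$, and hence $\mathcal D_{m-n}(\Phi\vec a)$, by $C\,\overline{\mathscr R}_{m-n}^{-1}\mathcal D_0(\vec a)$. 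Summing the cell estimates over $w\in W_n$ and invoking \eqref{eq:reduction of graph energy} one last time closes \eqref{eq:extension theorem}. I expect the most delicate step to be precisely this energy bound: checking that $T$ matches the $\beta$-profile on all four borders, and that the product rule and seminorm triangle inequality combine to yield the sharp $\overline{\mathscr R}_{m-n}^{-1}$ scaling, is the technical heart of the proposition.
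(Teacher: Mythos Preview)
Your argument for part (1) is essentially identical to the paper's: apply the resistance inequality $(f(x)-f(y))^2\le \mathscr R_{m-n}\,\mathcal D_{m-n}(f\circ\Psi_w)$ inside each $n$-cell and sum via \eqref{eq:reduction of graph energy}.

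For part (2) your proposal is correct but follows a genuinely different path. The paper builds a partition of unity $\{\psi_i\}_{i=0}^3$ by first forming $\widetilde\psi_0=(2h)\wedge(2h')\wedge 1$ from the side-to-side minimizer $h$ and its rotation $h'$, symmetrizing to obtain $\widetilde\psi_1,\widetilde\psi_2,\widetilde\psi_3$, and then normalizing $\psi_i=\widetilde\psi_i/\sum_j\widetilde\psi_j$; the extension is simply $\sum_i u(p_i)\psi_i$, and the energy bound comes from a product/quotient rule applied to $\widetilde\psi_i/\widetilde\psi$ together with $1\le\widetilde\psi\le 4$. Your route instead prescribes the $\beta$-interpolated border values and takes the harmonic filling, using the explicit bilinear test function $T=a_0+(a_1-a_0)u^{\mathrm{LR}}+(a_3-a_0)u^{\mathrm{BT}}+(a_0+a_2-a_1-a_3)u^{\mathrm{LR}}u^{\mathrm{BT}}$ to certify the energy bound. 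Both yield the same $\overline{\mathscr R}_{m-n}^{-1}$ scaling, the same border-compatibility properties (your (PE1)--(PE2) analogue follows from the symmetry of the unique minimizers $u^{\mathrm{LR}},u^{\mathrm{BT}}$), and the same local max/min control \eqref{eq:local perturbation}. Your approach is slightly more direct for the square geometry and avoids the normalization step; the paper's partition-of-unity construction is more modular and would transfer more readily to cells with a different boundary combinatorics. As a minor remark, you do not actually need the harmonic filling: $T$ itself already lies in $[\min_i a_i,\max_i a_i]$ because a bilinear function of $(u^{\mathrm{LR}},u^{\mathrm{BT}})\in[0,1]^2$ attains its extrema at the corners, so $T$ could serve directly as $\Phi(\vec a)$.
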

\begin{proof}
    (1) For each $w\in W_n$, 
    \begin{equation*}
    \begin{split} 
    \mcD_0\big((f|_{V_n})\circ\Psi_w\big)&= \sum_{x\stackrel{0}{\sim}y}\big(f\circ\Psi_w(x)-f\circ\Psi_w(y)\big)^2\\
    &\leq \sum_{x\stackrel{0}{\sim}y}\mathscr{R}_{m-n}\mathcal D_{m-n}(f\circ\Psi_w)=4\mathscr{R}_{m-n}\mathcal D_{m-n}(f\circ\Psi_w). 
    \end{split} 
    \end{equation*}
   The inequality then follows by applying $(n,n)$-reduction and $(m,n)$-reduction in \eqref{eq:reduction of graph energy}.\smallskip 

    (2) We first construct four \emph{pre-extension kernels} $\psi_i\in l(V_{m - n})$ for $0 \leq i \leq 3$ such that
    \begin{enumerate}[$(\mathrm{PEK}1)$]
        \item (Partition of unity) $0 \leq \psi_i \leq 1$ and $\psi_0 + \psi_1 +\psi_2 + \psi_3 \equiv 1$.
        \item (Boundary condition) $\psi_i(p_i) = 1$ and $\psi_i|_{\overline{p_jp_k}} \equiv 0$ for two pairs of adjacent $p_j,p_k\in V_0\setminus\{p_i\}$. 
        \item (Planar symmetry) The value of $\psi_i$ on the two unconditioned borders are symmetric to each other. For each pair $\psi_i,\psi_j$, each one is a planar symmetry of the other one.
        \item (Energy estimate) $\mathcal D_{m - n}(\psi_i) \leq c_{\mathrm{pek}}\overline{\mathscr R}_{m - n}^{-1}$ for some constant $c_{\mathrm{pek}} \geq 1$.
    \end{enumerate}

    To this end, we take $h\in l(V_{m - n})$ as the minimizer of the variation in the definition of $\overline{\mathscr R}_{m - n}$, that is $h|_{\overline{p_3p_0}}\equiv 1$, $h_{\overline{p_1p_2}}\equiv 0$ and $\mathcal D_{m - n}(h) = \overline{\mathscr R}_{m - n}^{-1}$. It is worth noting that additionally we have $h|_{\widehat{\pi}^{-1}([0,1/2]\times [0,1])}\geq 1/2$ by the planar symmetry exchanging $\overline{p_3p_0}$ and $\overline{p_1p_2}$. Indeed, this is clear when $L_{F}$ is an even number by the maximum principle of harmonic functions, while the case $L_{F}$ is odd follows by adding auxiliary points in $V_{m - n}$ along the corresponding symmetry axis.

    A planar symmetry transformation of $h$ induces a function $h'\in l(V_{m - n})$ such that $h'|_{\overline{p_0p_1}}\equiv 1$,  $h'|_{\overline{p_2p_3}}\equiv 0$ and $\mathcal D_{m - n}(h') = \overline{\mathscr R}_{m - n}^{-1}$. Define
    \[\widetilde{\psi}_0 := (2h)\wedge (2h')\wedge 1,\]
    which satisfies $\widetilde{\psi}_0|_{\widehat{\pi}^{-1}([0,1/2]\times [0,1/2])} \equiv 1$, $\widetilde{\psi}_0|_{\overline{p_1p_2}\cup \overline{p_2p_3}} \equiv 0$ and
    \begin{equation}\label{eq:energy of pre-pre-extension kernel}
        \mathcal D_{m - n}(\widetilde{\psi}_0) \leq \mathcal D_{m - n}(2h) + \mathcal D_{m - n}(2h') = 8\overline{\mathscr R}_{m- n}^{-1}.
    \end{equation}
    Define $\widetilde{\psi}_1,\widetilde{\psi}_2,\widetilde{\psi}_3\in l(V_{m - n})$ as planar symmetry transformation of $\widetilde{\psi}_0$ such that $\widetilde{\psi}_i(p_i) = 1$ for $1 \leq i \leq 3$, and let
    \begin{equation}\label{eq:pre-extension kernel}
        \widetilde{\psi} := \sum_{i = 0}^{3}\widetilde{\psi}_i\quad\textit{and}\quad \psi_i := \widetilde\psi_i/\widetilde{\psi}\quad\textit{for } 0 \leq i \leq 3.
    \end{equation}
    We check that $\psi_i$ satisfies the condition $(\mathrm{PEK1})$-$(\mathrm{PEK4})$.

    Condition $(\mathrm{PEK1})$-$(\mathrm{PEK3})$ follow from \eqref{eq:pre-extension kernel} and the boundary value of each $\widetilde{\psi}_i$. To prove condition ($\mathrm{PEK4}$), notice that $0 \leq \widetilde{\psi}_i \leq 1$ and  $1\leq \widetilde{\psi} \leq 4$ on $F$, which implies
    \[\begin{aligned}
        \mathcal D_{m - n}(\psi_i) = \mathcal D_{m - n}(\widetilde{\psi}_i\cdot \widetilde{\psi}^{-1}) & \leq 2\|\widetilde{\psi}^{-1}\|_{\infty}^2\mathcal D_{m - n}(\widetilde{\psi}_i) + 2\|\widetilde{\psi}_i\|_{\infty}^2\mathcal D_{m - n}(\widetilde{\psi}^{-1})\\
        & \leq 2\|\widetilde{\psi}^{-1}\|_{\infty}^2\mathcal D_{m - n}(\widetilde{\psi}_i) + 2\|\widetilde{\psi}_i\|_{\infty}^2\|\widetilde{\psi}^{-1}
        \|_{\infty}^2\mathcal D_{m - n}(\widetilde{\psi})\\
        & \leq 2\mathcal D_{m - n}(\widetilde{\psi}_i) + 2\mathcal D_{m - n}(\widetilde{\psi})\\
        & \leq (2 + 2\cdot 4^2)\mathcal D_{m - n}(\widetilde{\psi}_0)\\
        & = 272\overline{\mathscr R}_{m - n}^{-1},
    \end{aligned}\]
    where $\|\cdot\|_{\infty}$ denotes the maximum norm of functions, and the last line follows by \eqref{eq:energy of pre-pre-extension kernel}. Thus all conditions $(\mathrm{PEK1})$-$(\mathrm{PEK4})$ are satisfied by $\psi_i$ with constant $c_{\mathrm{pek}} := 272$.\vspace{.2cm}

    For each $u\in l(V_0)$, we define the \emph{pre-extension operator} $\mathfrak{E}_{0,m - n}u\in l(V_{m - n})$ by
    \begin{equation}
        \mathfrak{E}_{0,m - n}u := \sum_{i = 0}^{3}u(p_i)\psi_i,
    \end{equation}
    which satisfies $\mathfrak{E}_{0,m - n}u|_{V_0} = u$ by condition ($\mathrm{PEK2}$), and that
    \begin{enumerate}[$(\mathrm{PE}1)$]
        \item The value of $\mathfrak{E}_{0,m-n}u$ on $\overline{p_ip_j}$ is independent of the value of $u$ on $p_{k},p_l\in V_0\setminus \{p_i,p_j\}$ by $(\mathrm{PEK2})$.
        \item The relation between the value of $\mathfrak{E}_{0,m-n}u$ on $\overline{p_ip_j}$ and the value of $u$ on $p_i,p_j$ is independent of the choice of $i,j$ by $(\mathrm{PEK3})$.
        \item $\min_{V_0}u \leq \mathfrak{E}_{0,m - n}u(x) \leq \max_{V_0} u$ for all $x\in V_{m - n}$ by $(\mathrm{PEK1})$.
        \item It has energy upper bound
    \begin{equation}
    \begin{aligned}
        \mathcal D_{m - n}(\mathfrak{E}_{0,m - n}u) 
        & = \mathcal D_{m - n}\big(\sum_{i = 0}^{3}(u(p_i) - c_u)\psi_i\big) \leq 4 \mathcal D_{m - n}(\psi_0)\sum_{i = 0}^{3}\big(u(p_i) - c_u\big)^2\\
        & \leq 4c_{\mathrm{pek}}\overline{\mathscr R}_{m - n}^{-1}\times 2\mathcal D_0(u) = 8c_{\mathrm{pek}}\overline{\mathscr R}^{-1}_{m - n}\mathcal D_0(u),
    \end{aligned}
    \end{equation}
    \end{enumerate}
    where the constant $c_u := \big(\sum_{i = 0}^{3}u(p_i)\big)/4$ is average of $u$, the first equation follows by $(\mathrm{PEK1})$, and the second inequality follows by ($\mathrm{PEK}4$).

    Now for each $u\in l(V_n)$, we define the \emph{extension operator} $\mathfrak{E}_{n,m}u\in l(V_m)$ satisfying that for all $w\in W_{n}$,
    \begin{equation}
        (\mathfrak{E}_{n,m}u)\circ \Psi_{w} = \mathfrak{E}_{0,m - n}(u\circ \Psi_{w}),
    \end{equation}
    which is compatible on each $\Psi_{w}(V_{m - n})\cap \Psi_{v}(V_{m - n})$ by $(\mathrm{PE1})$ and $(\mathrm{PE2})$. Moreover, \eqref{eq:local perturbation} follows by $(\mathrm{PE3})$, and the desired estimate \eqref{eq:extension theorem} follows by applying $(m,n)$-reduction and $(n,n)$-reduction in \eqref{eq:reduction of graph energy}, together with $\mathrm{(PE4)}$, on each sides of \eqref{eq:extension theorem}. This completes the proof.
\end{proof}

As a consequence of Proposition \ref{p:trace theorem}, we have the following weak version of Theorem \ref{t:multiplicity of resistance constant constants}. Notice that since the feasible family of variation $\mathscr R_{n}$ is larger than $\overline{\mathscr R}_n$, we have a priori estimate
\begin{equation}\label{eq:a priori comparing between resistance constants}
    \overline{\mathscr R}_n \leq \mathscr R_n.
\end{equation}

\begin{corollary}\label{c:weak multiplicity}
    For all $0 \leq n \leq m$ and constant $c_{\mathrm{r}},c_{\mathrm e}\geq 1$ in Proposition \ref{p:trace theorem}, we have
    \begin{equation}\label{eq:weak multiplicity}
        c^{-1}_{\mathrm{e}}\mathscr R_{n}\overline{\mathscr R}_{m - n} \leq \mathscr R_{m} \leq c_{\mathrm r}\mathscr R_{n}\mathscr R_{m - n}.
    \end{equation}
\end{corollary}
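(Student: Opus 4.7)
The plan is to deduce both inequalities directly from Proposition \ref{p:trace theorem}, using that $p_0,p_1\in V_0\subset V_n$ so that the Dirichlet problem defining $\mathscr R_m$ can be rephrased on $V_n$ via the trace.

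For the upper bound on $\mathscr R_m$, I would start from the identity
\[
\mathscr R_m^{-1}=\min\bigl\{\mathcal D_m(g):g\in l(V_m),\,g(p_0)=1,\,g(p_1)=0\bigr\}=\min\bigl\{[\mathcal D_m]_{V_n}(f):f\in l(V_n),\,f(p_0)=1,\,f(p_1)=0\bigr\},
\]
which is just the variational definition of the trace energy. Combining this with the right inequality in \eqref{eq:trace theorem}, rearranged as $[\mathcal D_m]_{V_n}(f)\geq c_{\mathrm r}^{-1}\mathscr R_{m-n}^{-1}\mathcal D_n(f)$, and taking the infimum over feasible $f$ yields $\mathscr R_m^{-1}\geq c_{\mathrm r}^{-1}\mathscr R_{m-n}^{-1}\mathscr R_n^{-1}$, that is, $\mathscr R_m\leq c_{\mathrm r}\mathscr R_n\mathscr R_{m-n}$.

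For the lower bound, I would use the extension operator $\mathfrak{E}_{n,m}$ from part (2) of Proposition \ref{p:trace theorem}. Given any $f\in l(V_n)$ with $f(p_0)=1$ and $f(p_1)=0$ achieving (or approaching) the minimum $\mathcal D_n(f)=\mathscr R_n^{-1}$, the function $\mathfrak{E}_{n,m}f\in l(V_m)$ satisfies $(\mathfrak{E}_{n,m}f)(p_0)=1$ and $(\mathfrak{E}_{n,m}f)(p_1)=0$ (this uses \eqref{eq:local perturbation} applied to the $1$-cells containing $p_0$ and $p_1$, since the extremal values on such cells are preserved and $0,1$ are the respective extrema of $f$ on $V_0$). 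Hence $\mathfrak{E}_{n,m}f$ is admissible in the definition of $\mathscr R_m$, and \eqref{eq:extension theorem} gives
\[
\mathscr R_m^{-1}\leq \mathcal D_m(\mathfrak{E}_{n,m}f)\leq c_{\mathrm e}\overline{\mathscr R}_{m-n}^{-1}\mathcal D_n(f)=c_{\mathrm e}\overline{\mathscr R}_{m-n}^{-1}\mathscr R_n^{-1},
\]
which rearranges to $\mathscr R_m\geq c_{\mathrm e}^{-1}\mathscr R_n\overline{\mathscr R}_{m-n}$.

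There is no real obstacle here since the heavy lifting is done by Proposition \ref{p:trace theorem}; the only subtle point to verify carefully is that the extension $\mathfrak{E}_{n,m}f$ preserves the two-point boundary condition at $p_0,p_1$. This is immediate from \eqref{eq:local perturbation} applied to the $1$-cells $\Psi_w(F)$ containing $p_0$ respectively $p_1$, because $0$ and $1$ are already attained as the minimum and maximum of $f$ on $V_0$, and these extrema are then attained again by $\mathfrak{E}_{n,m}f$ on the corresponding $(m-n)$-scale cell.
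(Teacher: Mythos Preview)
Your approach is exactly the paper's: restrict the $V_m$-minimizer for the upper bound, extend the $V_n$-minimizer for the lower bound. The upper bound is fine.

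For the lower bound, however, your justification that $\mathfrak{E}_{n,m}f$ is admissible is both unnecessary and not quite right. Equation \eqref{eq:local perturbation} only tells you that the \emph{extrema} of $f\circ\Psi_w$ over $V_0$ are reproduced as the extrema of $(\mathfrak{E}_{n,m}f)\circ\Psi_w$ over $V_{m-n}$; it does not say at which point the extremum is attained, so from it alone you cannot conclude $(\mathfrak{E}_{n,m}f)(p_0)=1$ specifically. (Also, the relevant $w$ lies in $W_n$, not $W_1$, and $f\circ\Psi_w|_{V_0}$ is not the same as $f|_{V_0}$.) The correct---and much simpler---reason is already stated in Proposition \ref{p:trace theorem}(2): $(\mathfrak{E}_{n,m}f)|_{V_n}=f$, and since $p_0,p_1\in V_0\subset V_n$ this gives $(\mathfrak{E}_{n,m}f)(p_0)=1$, $(\mathfrak{E}_{n,m}f)(p_1)=0$ immediately. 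With that fix your argument matches the paper's proof verbatim.
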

\begin{proof}
    For each $k\geq 0$, take $h_k\in l(V_k)$ as the minimizer of the variation $\mathscr R_{k}$. Then by Proposition \ref{p:trace theorem} (1), we have
    \[\mathscr R_{n}^{-1} \leq \mathcal D_{n}(h_{m}|_{V_n}) \leq c_{\mathrm{r}}\mathscr R_{m - n}\mathcal D_{m}(h_m) = c_{\mathrm{r}}\mathscr R_{m - n}\mathscr R^{-1}_{m},\]
    which implies the right hand side of \eqref{eq:weak multiplicity}. On the other hand, by Proposition \ref{p:trace theorem} (2), we have
    \begin{equation}
        \mathscr R^{-1}_{m} \leq \mathcal D_{m}(\mathfrak E_{n,m}h_n) \leq c_{\mathrm{e}}\overline{\mathscr R}_{m - n}^{-1}\mathcal D_{n}(h_n) = c_{\mathrm{e}}\overline{\mathscr R}_{m - n}^{-1}\mathscr R^{-1}_{n},
    \end{equation}
    which implies the left hand side of \eqref{eq:weak multiplicity}.
\end{proof}

Taking $m = n +1$ in Corollary \ref{c:weak multiplicity}, we derive the two-sided doubling property of sequence $\mathscr R_{n}$, that is, there exists a constant $c_{\mathrm{d}}\geq 1$ independent of $n$ such that
\begin{equation}\label{eq:two-sides doubling or resistance constants}
    c^{-1}_{\mathrm d}\mathscr R_{n} \leq \mathscr R_{n + 1} \leq c_{\mathrm d}\mathscr R_{n}.
\end{equation}

\subsection{Comparison between resistance constants.} To upgrade Corollary \ref{c:weak multiplicity} to Theorem \ref{t:multiplicity of resistance constant constants}, it remains to prove the converse estimate of \eqref{eq:a priori comparing between resistance constants}.

Due to the condition $\mathrm{(PC4)}$ of admissible piling multiplicity in Definition \ref{d:admissible piling multiplicity}, we have the following a priori geometric increasing lower bound of $\overline{\mathscr R}_n$ and reversed doubling property of sequence $\mathscr R_{n}$.

\begin{lemma}\label{l:a priori lower bound estimate of resistance constants}
    There exist constants $c_{\mathrm{a}} \geq 1$ and $\rho_{\mathrm a}\in(1,\infty)$ depending on $L_{F}$ such that for all $n\geq 0$, we have
    \begin{equation}\label{eq:a priori lower bound estimate of resistance constants}
        \overline{\mathscr R}_n \geq c^{-1}_{\mathrm a}\rho_{\mathrm{a}}^{n}.
    \end{equation}
    In particular, for each $0 \leq n \leq m$, we have the reversed doubling estimate with constant $c_{\mathrm{rd}} > 0$ depending on $L_{F}$,
    \begin{equation}\label{eq:reversed doubling estimate}
        \rho_{\mathrm{a}}^{m - n}\mathscr R_{n} \leq c_{\mathrm{rd}}\mathscr R_{m}.
    \end{equation}
\end{lemma}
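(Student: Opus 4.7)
The plan is to establish the geometric lower bound on $\overline{\mathscr R}_n$ via a Nash-Williams cut argument through the horizontal projection $\widehat\pi_1$, with the strict gain $\rho_{\mathrm a}>1$ coming directly from condition $(\mathrm{PC}4)$, and then to derive the reversed doubling estimate from Corollary~\ref{c:weak multiplicity}.

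For each $n\geq 0$ and $j\in\{0,1,\ldots,L_F^n-1\}$, I would introduce the edge set
\[
\Pi_j := \big\{\{x,y\} : x\stackrel{n}{\sim}y,\ \widehat\pi_1(x)<(2j+1)/(2L_F^n)<\widehat\pi_1(y)\big\}.
\]
Every vertex of $V_n$ has $\widehat\pi_1$-value in $L_F^{-n}\mathbb{Z}\cap[0,1]$, and every $n$-edge is the $\Psi_w$-image of a side of $V_0$, so $\widehat\pi_1$ moves by either $0$ or $L_F^{-n}$ along any edge. A discrete intermediate-value argument along any chain in $V_n$ connecting $\overline{p_3p_0}\cap V_n$ to $\overline{p_1p_2}\cap V_n$ then shows that each $\Pi_j$ is an edge-cut separating these two sets, while $\{\Pi_j\}_{j=0}^{L_F^n-1}$ is pairwise edge-disjoint because each horizontal edge crosses exactly one of the $L_F^n$ half-lines. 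Since $\#W_n=\|\nu\|^n$ and each cell $\Psi_w(V_0)$ contributes at most two horizontal edges, one gets $\sum_j|\Pi_j|\leq 2\|\nu\|^n$. The standard Nash-Williams inequality combined with the harmonic-mean Cauchy--Schwarz bound $\sum_j|\Pi_j|^{-1}\geq (L_F^n)^2/\sum_j|\Pi_j|$ then yields
\[
\overline{\mathscr R}_n \geq \frac{L_F^{2n}}{2\|\nu\|^n} = \frac{1}{2}\bigg(\frac{L_F^2}{\|\nu\|}\bigg)^n,
\]
so by $(\mathrm{PC}4)$ one may take $\rho_{\mathrm a}:=L_F^2/(L_F^2-1)>1$ and $c_{\mathrm a}:=2$.

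The reversed doubling then follows directly by plugging this into the left-hand inequality of Corollary~\ref{c:weak multiplicity}:
\[
\mathscr R_m \geq c_{\mathrm e}^{-1}\mathscr R_n\,\overline{\mathscr R}_{m-n} \geq (c_{\mathrm e}c_{\mathrm a})^{-1}\rho_{\mathrm a}^{m-n}\mathscr R_n,
\]
so $c_{\mathrm{rd}}:=c_{\mathrm e}c_{\mathrm a}$ works. The only subtle point is verifying that the $\mathcal R_n$-gluing---including the piling of interior cells, which can make $V_n$ non-planar---does not destroy the separator property of any $\Pi_j$. This reduces to the observation that $\mathcal R_n$ only identifies points sharing a planar position, so $\widehat\pi_1$ is well-defined on $V_n$ and the intermediate-value argument above applies verbatim along any chain in the quotient graph.
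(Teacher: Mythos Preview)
Your argument is correct. The paper takes a dual and more direct route: instead of Nash--Williams, it plugs the linear test function $h(x)=1-\widehat\pi_1(x)$ into the variational definition of $\overline{\mathscr R}_n$, obtaining
\[
\overline{\mathscr R}_n^{-1}\leq\mathcal D_n(h|_{V_n})=\sum_{x\stackrel{n}{\sim}y}\big(\widehat\pi_1(x)-\widehat\pi_1(y)\big)^2\leq 4c_{\mathrm{ulf}}\|\nu\|^nL_F^{-2n},
\]
since each edge contributes at most $L_F^{-2n}$ and the total number of edges is at most $4c_{\mathrm{ulf}}\|\nu\|^n$. Your cuts $\Pi_j$ are precisely the level-crossings of this potential, and after the AM--HM step the two computations collapse to the same inequality; your horizontal-edge count $2\|\nu\|^n$ is in fact sharper than the paper's all-edge count, yielding $c_{\mathrm a}=2$ rather than $4c_{\mathrm{ulf}}$, with the same $\rho_{\mathrm a}=L_F^2/(L_F^2-1)$. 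The test-function version is a one-liner, while your cut version makes the separator structure explicit. The derivation of \eqref{eq:reversed doubling estimate} from the left inequality of Corollary~\ref{c:weak multiplicity} is identical to the paper's.
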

\begin{proof}
    The estimate \eqref{eq:reversed doubling estimate} is reduced into \eqref{eq:a priori lower bound estimate of resistance constants} by the left hand side of Corollary \ref{c:weak multiplicity}, thus it remains to prove \eqref{eq:a priori lower bound estimate of resistance constants}. Let $h\in C(F)$ defined by $h(x) := 1 - \widehat{\pi}_1(x)\in[0,1]$, where $\widehat{\pi}_1$ is defined in \eqref{eq:projection from carpet to plane}. Then $h|_{\overline{p_3p_0}}\equiv 1$, $h|_{\overline{p_1p_2}}\equiv 0$ and
    \[\overline{\mathscr R}_{n}^{-1} \leq \mathcal D_n(h|_{V_n}) = \sum_{x\stackrel{n}{\sim}y}(h(x) - h(y))^2 \leq 4c_{\mathrm{ulf}}\|\nu\|^{n}L_{F}^{-2n} \leq 4c_{\mathrm{ulf}}(L_{F}^2 - 1)^nL_{F}^{-2n},\]
    where the second inequality follows by that the number of edges in $V_n$ is not larger than $c_{\mathrm{ulf}}\# V_n \leq 4c_{\mathrm{ulf}}\|\nu\|^n$. The desired result follows by taking $c_{\mathrm{a}} := 4c_{\mathrm{ulf}}$, $\rho_{\mathrm{a}} := L^2_{F}/(L_{F}^2 - 1)$.
\end{proof}

The estimate \eqref{eq:reversed doubling estimate} is called reversed doubling estimate, since it is equivalent to the existence of $N_0\geq 1$ such that for all $n\geq 0$,
\[2\mathscr R_{n} \leq \mathscr R_{n + N_0},\]
which provides a tension against two-sides doubling estimate \eqref{eq:two-sides doubling or resistance constants}. 

\begin{corollary}\label{c:Holder continuity on graph}
    For each $f\in l(V_m)$ and $x,y\in V_m$, we have
    \begin{equation}\label{eq:Holder continuity on graph}
        |f(x) - f(y)|^2 \leq c_{\mathrm{H}}d_{F}(x,y)^{\theta_{\mathrm{H}}}\mathscr R_{m}\mathcal D_{m}(f),
    \end{equation}
    where $c_{\mathrm H} \geq 1$ and $0 < \theta_{\mathrm{H}} < 2$ are constants depending on $L_{F}$.
\end{corollary}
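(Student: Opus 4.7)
The plan is to combine the resistance metric inequality $(f(x)-f(y))^2 \le R_m(x,y)\,\mcD_m(f)$ with a spatial bound of the form
\[
R_m(x,y) \le c\,d_F(x,y)^{\theta_{\mathrm H}}\,\mathscr R_m,
\]
obtained by chaining through cells at an appropriate scale and then invoking the reversed doubling estimate of Lemma \ref{l:a priori lower bound estimate of resistance constants}.

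First, for $x,y \in V_m$ I pick the largest $n \in \{0,\ldots,m\}$ with $d_F(x,y) \le L_F^{-n}$. By (PC2), (PC3) and the uniformly locally finite property \eqref{eq:metric doubling property}, $x$ and $y$ lie in a chain of at most $K = K(L_F)$ cells $F_{w_1},\ldots,F_{w_K}$ at level $n$ with $x \in F_{w_1}$, $y \in F_{w_K}$ and consecutive cells sharing at least one vertex in $V_n$; I set $v_0 = x$, $v_K = y$ and pick $v_i \in F_{w_i} \cap F_{w_{i+1}} \cap V_n$ for $1 \le i \le K-1$.

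For each link, since every edge of $V_m$ is contained in a single $n$-cell, any unit flow on the subgraph $\Psi_{w_i}(V_{m-n}) \subset V_m$ is a valid flow on $V_m$, so Thomson's principle yields
\[
R_m(v_{i-1},v_i) \le R_{m-n}\bigl(\Psi_{w_i}^{-1}(v_{i-1}),\Psi_{w_i}^{-1}(v_i)\bigr).
\]
For the interior links ($1 \le i \le K-1$) both pullbacks lie in $V_0$ and the triangle inequality on the four-vertex graph $V_0$ gives the bound $\le 2\mathscr R_{m-n}$. For the two endpoint links involving $x$ or $y$, one pullback is an arbitrary vertex of $V_{m-n}$, and I establish the \emph{interior-to-boundary} estimate
\[
\sup_{u \in V_k} R_k(u,V_0) \le C\,\mathscr R_k \qquad (k \ge 0)
\]
by telescoping along the nested cell sequence containing $u$: each one-level transition reduces, after pull-back, to a $V_0$-to-$V_1$ resistance (itself handled by a two-step chain through a common corner together with the two-sided doubling \eqref{eq:two-sides doubling or resistance constants}), and the resulting telescoping sum over scales is summable uniformly by the geometric decay supplied by the reversed doubling \eqref{eq:reversed doubling estimate}.

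Summing the $K$ links by the resistance triangle inequality gives $R_m(x,y) \le c_0\,\mathscr R_{m-n}$. Applying \eqref{eq:reversed doubling estimate} together with $L_F^{-n} \le L_F\,d_F(x,y)$ yields
\[
\mathscr R_{m-n} \le c_{\mathrm{rd}}\,\rho_{\mathrm a}^{-n}\mathscr R_m \le c_{\mathrm{rd}}\,L_F^{\theta_{\mathrm H}}\,d_F(x,y)^{\theta_{\mathrm H}}\,\mathscr R_m,
\]
with $\theta_{\mathrm H} := \log\rho_{\mathrm a}/\log L_F$; since $\rho_{\mathrm a} = L_F^2/(L_F^2-1) \in (1,L_F^2)$ one has $\theta_{\mathrm H} \in (0,2)$. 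Plugging the resulting bound on $R_m(x,y)$ into the resistance metric inequality gives \eqref{eq:Holder continuity on graph}. The main obstacle is the interior-to-boundary estimate for the endpoint links, which is where all the delicate multi-scale bookkeeping happens; once it is in place, everything else is a chain-and-sum argument feeding on the reversed doubling of $\mathscr R_k$.
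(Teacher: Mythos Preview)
Your proposal is correct. Both your argument and the paper's rest on the same two ingredients\textemdash chaining through nested cells and the reversed doubling estimate \eqref{eq:reversed doubling estimate}\textemdash but they are packaged differently. The paper builds a single \emph{multi-scale} chain $x=x_0,x_1,\ldots,x_N=y$ in which, for every level $n$ between $M_{x,y}:=\lceil -\log_{L_F}d_F(x,y)\rceil$ and $m$, only boundedly many consecutive pairs are $n$-adjacent; it then bounds each $n$-adjacent link by $(c_{\mathrm{rd}}\rho_{\mathrm a}^{-n}\mathscr R_m\mcD_m(f))^{1/2}$ and sums the resulting geometric series in $n$ directly. You instead chain once at the single coarsest relevant scale $n$ and push all the multi-scale work into the auxiliary interior-to-boundary estimate $\sup_{u\in V_k}R_k(u,V_0)\lesssim \mathscr R_k$, whose telescoping proof is essentially the paper's multi-scale chain specialised to a single endpoint. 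The paper's route is a little leaner (no separate lemma to state), while yours isolates a reusable pointwise resistance bound and makes the role of the resistance triangle inequality explicit; the underlying analysis is the same geometric-series summation over scales.
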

\begin{proof}
    If $x,y\in V_n$ is adjacent in $V_n$ for some $0 \leq n \leq m$, then $d_{F}(x,y) = L_{F}^{-n}$, and there exists $w\in W_n$ such that $x,y\in \Psi_{w}(V_0)$. Then by \eqref{eq:reversed doubling estimate}, we have
    \begin{equation}\label{eq:Holder continuity for adjacent points}
        |f(x) - f(y)|^2 \leq \mathscr R_{m - n}\mathcal D_{m - n}(f
        \circ \Psi_{w}) \leq c_{\mathrm{rd}}\rho_{\mathrm a}^{-n}\mathscr R_{m}\mathcal D_{m}(f).
    \end{equation}

    For general $x\neq y\in V_{m}$, there exists a chain $x_i$ for $0 \leq i \leq N$ such that 
    \begin{enumerate}[(1)]
        \item $x_0 = x,x_{N} = y$.
        \item For each $1 \leq i \leq N$, there exists $M_{x,y} \leq n \leq m$ such that $x_i,x_{i-1}$ is adjacent in $V_{n}$, where $0 \leq M_{xy} \leq m$ is the smallest integer such that $-\log d_{F}(x,y)/\log L_{F} \leq M_{xy}$.
        \item For each $M_{x,y} \leq n \leq m$, it holds that
        \begin{equation}\label{eq:chain argument}
            \#\big\{1 \leq i \leq N:x_{i}\stackrel{n}{\sim}x_{i - 1}\big\} \leq c_{\mathrm{chain}},
        \end{equation}
        where $c_{\mathrm{chain}}\geq 1$ only depending on $L_{F}$.
    \end{enumerate}
    Then we have
    \[\begin{aligned}
        |f(x) - f(y)| \leq \sum_{i = 1}^{N}|f(x_i) - f(x_{i-1})| & \leq \sum_{n = M_{x,y}}^{m}c_{\mathrm{chain}}\cdot c_{\mathrm{rd}}^{1/2}\rho_{\mathrm a}^{-n/2}\mathscr R_{m}^{1/2}\mathcal D_{m}(f)^{1/2}\\
        & \leq c_{\mathrm{H}}^{1/2}d_{F}(x,y)^{\theta_{\mathrm{H}}/2}\mathscr R_{m}^{1/2}\mathcal D_{m}(f)^{1/2},
        \end{aligned}\]
    where the second inequality follows by \eqref{eq:Holder continuity for adjacent points} and \eqref{eq:chain argument}, and $\theta_{H}:= \log \rho_{\mathrm a}/\log L_{F} < 2$.
\end{proof}

Now we are at the stage to prove the main target of this subsection.

\begin{proposition}\label{p:converse estimate between resistance constants}
    There exists a constant $c_{\mathrm{c}} \geq 1$ such that $\mathscr R_{m} \leq c_{\mathrm{c}}\overline{\mathscr R}_m$.
\end{proposition}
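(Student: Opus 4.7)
The strategy is to construct, from the $\mathscr R_m$-minimizer $h\in l(V_m)$ (so $h(p_0)=1$, $h(p_1)=0$, and $\mathcal D_m(h)=\mathscr R_m^{-1}$), a function $\bar h\in l(V_m)$ that is feasible for the $\overline{\mathscr R}_m$-problem, i.e.\ $\bar h|_{\overline{p_3p_0}\cap V_m}\equiv 1$ and $\bar h|_{\overline{p_1p_2}\cap V_m}\equiv 0$, with $\mathcal D_m(\bar h)\leq c_{\mathrm c}\mathcal D_m(h)$. The conclusion then follows immediately from the variational characterization of $\overline{\mathscr R}_m^{-1}$: $\overline{\mathscr R}_m^{-1}\leq\mathcal D_m(\bar h)\leq c_{\mathrm c}\mathscr R_m^{-1}$, and hence $\mathscr R_m\leq c_{\mathrm c}\overline{\mathscr R}_m$.

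I would first symmetrize $h$ using the horizontal reflection $\sigma_{\mathrm h}$ combined with the affine reflection $f\mapsto 1-f$; both operations preserve the constraints $f(p_0)=1$, $f(p_1)=0$ and the energy, so by uniqueness of the quadratic minimizer on the connected graph $V_m$ the true minimizer already satisfies $h+h\circ\sigma_{\mathrm h}\equiv 1$ pointwise, and in particular $h(p_2)+h(p_3)=1$ as well as $h(x)+h(\sigma_{\mathrm h}(x))=1$ for matched left/right-border points. The main construction of $\bar h$ is cell-wise: fix a scale $n_0$ depending only on $L_F$ (to be chosen at the end), decompose $V_m=\bigcup_{w\in W_{n_0}}\Psi_w(V_{m-n_0})$, and define $\bar h|_{\Psi_w(V_{m-n_0})}:=\mathfrak E_{0,m-n_0}(\tilde f_w)\circ\Psi_w^{-1}$, where the boundary data $\tilde f_w\in l(V_0)$ takes the value $1$ at every $V_0$-corner mapped by $\Psi_w$ into $\overline{p_3p_0}$, the value $0$ at every corner mapped into $\overline{p_1p_2}$, and the value $h(\Psi_w(p_i))$ at all remaining corners. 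Properties $(\mathrm{PE}1)$–$(\mathrm{PE}2)$ from the proof of Proposition \ref{p:trace theorem} make $\bar h$ well-defined at cell overlaps, while property $(\mathrm{PEK}2)$ (together with the partition-of-unity $(\mathrm{PEK}1)$ applied at each border cell) guarantees the prescribed border values.

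The energy bound starts from
\[
    \mathcal D_m(\bar h)\leq \sum_{w\in W_{n_0}}\mathcal D_{m-n_0}(\mathfrak E_{0,m-n_0}(\tilde f_w))\leq c_{\mathrm e}\overline{\mathscr R}_{m-n_0}^{-1}\sum_{w\in W_{n_0}}\mathcal D_0(\tilde f_w),
\]
combining the $(m,n_0)$-reduction \eqref{eq:reduction of graph energy} with the extension theorem \eqref{eq:extension theorem}. For interior cells we have $\tilde f_w=h\circ\Psi_w|_{V_0}$, and the restriction theorem \eqref{eq:restriction theorem} bounds $\mathcal D_0(\tilde f_w)$ by $c_{\mathrm r}\mathscr R_{m-n_0}\mathcal D_{m-n_0}(h\circ\Psi_w)$, so interior cells aggregate to at most $c_{\mathrm r}c_{\mathrm{ulf}}\mathscr R_{m-n_0}\mathcal D_m(h)$. \textbf{The main obstacle} is the contribution of border cells, where $\tilde f_w$ differs from $h\circ\Psi_w|_{V_0}$ by values forced to $0$ or $1$, producing terms such as $(1-h(\Psi_w(p_1)))^2$ at an interior corner of a left-border cell that are a priori only bounded by the crude constant $1$. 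To tame these, one exploits the horizontal antisymmetry $h+h\circ\sigma_{\mathrm h}\equiv 1$ to relate $1-h$ at a left-border corner with $h$ at its $\sigma_{\mathrm h}$-image on the right-border side, and then uses Hölder continuity (Corollary \ref{c:Holder continuity on graph}) within each $n_0$-cell to bound these discrepancies by oscillations of $h$ along chains crossing the cell. Summing the resulting contributions and absorbing the multiplicative constants via the reversed doubling \eqref{eq:reversed doubling estimate} and the weak multiplicity \eqref{eq:weak multiplicity}, choosing $n_0$ depending only on $L_F$ yields $\mathcal D_m(\bar h)\leq c_{\mathrm c}\mathcal D_m(h)$; the careful bookkeeping here is the technical heart of the proof.
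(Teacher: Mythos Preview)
Your construction has a circularity that appears already at the interior cells, before you ever get to the border term you flag as the ``main obstacle''. After the $(m,n_0)$-reduction and the extension estimate you have
\[
\mathcal D_m(\bar h)\;\le\; c_{\mathrm e}\,\overline{\mathscr R}_{m-n_0}^{-1}\sum_{w\in W_{n_0}}\mathcal D_0(\tilde f_w),
\]
and the interior contribution to the sum is, via the restriction theorem, at most $c_{\mathrm r}c_{\mathrm{ulf}}\,\mathscr R_{m-n_0}\,\mathcal D_m(h)$. Combining the two gives
\[
\mathcal D_m(\bar h)\;\le\; c_{\mathrm e}c_{\mathrm r}c_{\mathrm{ulf}}\,\frac{\mathscr R_{m-n_0}}{\overline{\mathscr R}_{m-n_0}}\,\mathcal D_m(h)\;+\;(\text{border}),
\]
so to reach $\mathcal D_m(\bar h)\le c_{\mathrm c}\,\mathcal D_m(h)$ you must already know that $\mathscr R_{m-n_0}/\overline{\mathscr R}_{m-n_0}$ is bounded independently of $m$. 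That is the proposition itself at level $m-n_0$, and neither the weak multiplicity \eqref{eq:weak multiplicity} nor the reversed doubling \eqref{eq:reversed doubling estimate} furnishes it: those inequalities never bound $\mathscr R_k/\overline{\mathscr R}_k$ from above. An induction on $m$ does not close either, since the constant picks up the factor $c_{\mathrm e}c_{\mathrm r}c_{\mathrm{ulf}}>1$ every $n_0$ levels. The border term has the same defect: after the antisymmetry substitution, $1-h(\Psi_w(p_1))=h(\sigma_{\mathrm h}\Psi_w(p_1))$ is a value of $h$ near the opposite edge which is in general $O(1)$ (e.g.\ for cells near $p_2$, where only $h(p_2)+h(p_3)=1$ is known), and H\"older continuity within a single $n_0$-cell cannot see this global drop.

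The paper avoids the loop by never attempting to build an $\overline{\mathscr R}_m$-feasible function out of $h$. Instead it uses Corollary~\ref{c:Holder continuity on graph} to pick a fixed level $M$ (depending only on $L_F$) at which the $\mathscr R_m$-minimizer is already $\ge 3/4$ on the entire $M$-cell containing $p_0$ and $\le 1/4$ on the $M$-cell containing $p_1$; a truncation then compares $\mathscr R_m$ with the resistance between these two \emph{thick} sets inside the one-cell-wide bottom strip $V'_m$. A doubling iteration along that strip---the resistance $r_i$ from the first cell to the $i$-th satisfies $r_j\ge r_i/4$ for every $i/2+1\le j\le i$, by left--right symmetry of the $r_i$-minimizer---collapses $r_N\ge\mathscr R_m/4$ down to $r_3=\overline{\mathscr R}_{m-M}$, and only the two-sided doubling \eqref{eq:two-sides doubling or resistance constants} at the fixed shift $M$ is needed to finish. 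No ratio $\mathscr R/\overline{\mathscr R}$ ever enters the chain.
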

\begin{proof}
    Let $h\in l(V_m)$ be the minimizer of variation $\mathscr R_{m}$, that is $h(p_0) = 1,h(p_1) = 0$ and $\mathscr R_{m}\mathcal D_{m}(h) = 1$. Then by Corollary \ref{c:Holder continuity on graph}, there exists a constant $M\geq 0$ such that for all $m\geq M$ and $w,v\in W_{M}$ such that $p_0\in F_{w},p_1\in F_{v}$, it holds that
    \[h|_{\Psi_{w}(V_{m - M})} \geq  3/4\quad\textit{and}\quad h|_{\Psi_{v}(V_{m - M})} \leq 1/4,\]
    since the diameter of $\Psi_{w}(V_{m - M}),\Psi_{v}(V_{m - M})$ with respect to $d_{F}$ is approximately $L_{F}^{-M}$ independent of $m$.

    We define the subgraph $V'_{m}\subset V_{m}$ by
    \begin{equation}\label{eq:definition of V'}
        V'_{m} := \bigcup\big\{\Psi_{u}(V_{m - M}):u\in W_{M},F_{u}\cap \overline{p_0p_1}\neq\emptyset\big\},
    \end{equation}
    and restrict our discussion from graph $V_{m}$ to $V'_{m}$. Define graph energy $\mathcal D'_{m}$ on $l(V'_m)$ by replacing the range of $x,y$ from $V_{m}$ to $V'_{m}$ in \eqref{eq:graph energy}, and define the effective resistance $R'_{m}$ on $V'_{m}$ by replacing $\mathcal D_m$ with $\mathcal D'_m$ in \eqref{eq:resistance of disjoint sets}. Then we have
    \begin{equation}\label{eq:reduce to bold border}
        \mathscr R_m^{-1} = \mathcal D_{m}(h) \geq \mathcal D'_m(h) \geq \mathcal D'_m(\overline h) \geq 4^{-1}\big(R'_{m}(F_{w}\cap V'_m,F_{v}\cap V'_m)\big)^{-1},
    \end{equation}
    where $\overline{h} := (h\wedge (3/4))\vee (1/4)$. 

    We label the words in \eqref{eq:definition of V'} by $w_i\in W_{M}$ for $1 \leq i \leq L_{F}^M =:N$ such that $F_{w_i}\cap \overline{p_0p_1}\neq\emptyset$, $w_1 = w,w_{N} = v$ and $F_{w_i}\cap F_{w_{i-1}}\neq\emptyset$. Define $r_i := R'_{m}(F_{w}\cap V'_m,F_{w_i}\cap V'_m)$ for each $3 \leq i \leq N$, which satisfies $r_{N} \geq \mathscr R_{m}/4$ by \eqref{eq:reduce to bold border}, and $r_3 = \overline{\mathscr R}_{m- M}$.
    
    For each $4 \leq i \leq N$, we claim the following doubling property
    \begin{equation}\label{eq:doubling reduction along bold border}
        r_{j} \geq r_i/4\quad\textit{for all } i/2 + 1\leq j \leq i.
    \end{equation}
    Indeed, take $h_i\in l(V'_m)$ as the minimizer of variation $r_i$, which satisfies $h_i|_{\Psi_{w_j}(V_{m - M})} \leq 1/2$ for all $i/2 + 1 \leq j \leq i$ by left-right symmetry along $V'_m$. Then the desired doubling property \eqref{eq:doubling reduction along bold border} follows by taking function $(2h_i - 1)\vee 0$, which is feasible for the variation $r_j$. 
    
    Iterating \eqref{eq:doubling reduction along bold border} with $j = i - 1$ as $i$ varying from $N$ to $4$, it implies 
    \[\overline{\mathscr R}_{m -M} = r_{3} \geq 4^{-N}r_{N} \geq 4^{-N - 1}\mathscr R_m  \geq 4^{-N -1}c_{\mathrm d}^{-M}\mathscr R_{m - M}\]
    for all $m\geq M$, where the last inequality follows by \eqref{eq:two-sides doubling or resistance constants}. Thus the desired result follows by taking $c_{\mathrm c} := 4^{N + 1}c_{\mathrm d}^{M}$.
\end{proof}

\noindent \emph{Proof of Theorem \ref{t:multiplicity of resistance constant constants}. } 
It follows by combining Corollary \ref{c:weak multiplicity} and Proposition \ref{p:converse estimate between resistance constants}.\hfill$\qed$\vspace{.2cm}

Recall that every sequence $a_n > 0$ satisfying $a_{n + m} \asymp a_{n}a_{m}$ for all $n,m\geq 0$ admits a ratio $r > 0$ such that $a_n \asymp r^{n}$ by Fekete's lemma. By Theorem \ref{t:multiplicity of resistance constant constants} and Lemma \ref{l:a priori lower bound estimate of resistance constants}, there exists $r_{\nu} > 1$ such that 
\begin{equation}\label{eq:fekete's lemma}
    c^{-1}_{\mathrm F}r_{\nu}^n \leq \mathscr R_{n}\leq c_{\mathrm F}r_{\nu}^n
\end{equation}
for all $n\geq 0$, where $c_{\mathrm F} \geq 1$ is determined by Feteke's lemma. 

\section{Self-similar Dirichlet forms on pillow-type carpets}

First we convert the natural graph energy sequence $\mathcal D_n$ into a compatible sequence $\widetilde{\mathcal D}_n$.

\begin{lemma}\label{l:decimation graph energy}
    There exists a sequence of graph energies $\widetilde{\mathcal D}_n$ on $V_n$ satisfying
    \begin{equation}\label{eq:decimation energy vs natural energy}
        c_{\mathrm D}^{-1}\mathscr R_{n}\mathcal D_n(f) \leq \widetilde{\mathcal D}_n(f) \leq c_{\mathrm D}\mathscr R_{n}\mathcal D_n(f)
    \end{equation}
    for all $f\in l(V_n)$, where $c_{\mathrm D} \geq 1$ is a constant depending on $L_{F}$, and for all $0 \leq n \leq m$, it satisfies decimation formula
    \begin{equation}\label{eq:decimation formula}
        [\widetilde{\mathcal D}_m]_{V_n} = \widetilde{\mathcal D}_n.
    \end{equation}
\end{lemma}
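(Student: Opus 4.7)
The plan is to obtain $\widetilde{\mathcal{D}}_n$ as a subsequential limit of the rescaled traces
$$\widetilde{\mathcal{D}}_n^{(m)}(f) := \mathscr{R}_m\,[\mathcal{D}_m]_{V_n}(f), \qquad m\geq n,$$
viewed as elements of the finite-dimensional cone of nonnegative symmetric quadratic forms on $l(V_n)$ that annihilate constants. The decimation identity \eqref{eq:decimation formula} holds automatically for each fixed $m$ before passing to the limit: for $n\leq m\leq N$, transitivity of the trace (one may compose the two nested minimizations into a single one over $g\in l(V_N)$ with $g|_{V_n}=f$) yields
$$[\widetilde{\mathcal{D}}_m^{(N)}]_{V_n} = \mathscr{R}_N\,\big[[\mathcal{D}_N]_{V_m}\big]_{V_n} = \mathscr{R}_N\,[\mathcal{D}_N]_{V_n} = \widetilde{\mathcal{D}}_n^{(N)},$$
so any limit along a common subsequence $N_k\to\infty$ will inherit \eqref{eq:decimation formula}.

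First I would verify the two-sided comparison $\widetilde{\mathcal{D}}_n^{(m)}(f)\asymp \mathscr{R}_n\mathcal{D}_n(f)$ with constants independent of $m\geq n$. Inserting $\mathfrak{E}_{n,m}f$ as a competitor for the minimum defining $[\mathcal{D}_m]_{V_n}(f)$ and applying \eqref{eq:extension theorem} gives
$[\mathcal{D}_m]_{V_n}(f)\leq c_{\mathrm{e}}\overline{\mathscr{R}}_{m-n}^{-1}\mathcal{D}_n(f)$, while \eqref{eq:restriction theorem} applied to the minimizer $g\in l(V_m)$ of $[\mathcal{D}_m]_{V_n}(f)$ gives
$\mathcal{D}_n(f)\leq c_{\mathrm{r}}\mathscr{R}_{m-n}[\mathcal{D}_m]_{V_n}(f)$. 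Multiplying through by $\mathscr{R}_m$ and invoking Theorem \ref{t:multiplicity of resistance constant constants} together with Proposition \ref{p:converse estimate between resistance constants} (which gives $\overline{\mathscr{R}}_{m-n}\asymp \mathscr{R}_{m-n}$, hence $\mathscr{R}_m\overline{\mathscr{R}}_{m-n}^{-1}\asymp \mathscr{R}_m\mathscr{R}_{m-n}^{-1}\asymp \mathscr{R}_n$) delivers \eqref{eq:decimation energy vs natural energy} with a single constant $c_{\mathrm{D}}$.

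Next, since $V_n$ is finite, the cone of nonnegative symmetric quadratic forms on $l(V_n)$ that annihilate constants embeds in a finite-dimensional Euclidean space via its conductance coefficients. The uniform bound just established confines $\{\widetilde{\mathcal{D}}_n^{(m)}\}_{m\geq n}$ to a bounded subset for every $n$, so a standard Cantor diagonal extraction produces a single sequence $m_k\to\infty$ along which $\widetilde{\mathcal{D}}_n^{(m_k)}$ converges to some $\widetilde{\mathcal{D}}_n$ for every $n\geq 0$ simultaneously. The estimate \eqref{eq:decimation energy vs natural energy} passes to the limit, and \eqref{eq:decimation formula} follows by sending $k\to\infty$ in the identity $[\widetilde{\mathcal{D}}_m^{(m_k)}]_{V_n}=\widetilde{\mathcal{D}}_n^{(m_k)}$, using that the trace operation---the optimal value of a finite-dimensional nonnegative quadratic program with a linear constraint---depends continuously on the coefficients of the underlying form.

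The only subtle point I foresee is representational: the trace $[\mathcal{D}_m]_{V_n}$ generally assigns positive conductances to pairs in $V_n$ that are not $\stackrel{n}{\sim}$-adjacent, so $\widetilde{\mathcal{D}}_n$ should be interpreted as a general resistance-type quadratic form on $l(V_n)$ rather than a conductance form supported strictly on $\stackrel{n}{\sim}$-edges. This does not affect any of the arguments above, and no further geometric input beyond what is already assembled in Section 3 is needed.
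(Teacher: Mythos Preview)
Your proposal is correct and follows essentially the same approach as the paper: define $\widetilde{\mathcal D}_n^{(m)}=\mathscr R_m[\mathcal D_m]_{V_n}$, use Proposition~\ref{p:trace theorem} together with Theorem~\ref{t:multiplicity of resistance constant constants} and Proposition~\ref{p:converse estimate between resistance constants} to obtain the uniform two-sided bound, extract a common subsequential limit via a diagonal argument, and deduce the decimation formula from transitivity of the trace and continuity of the trace operation under convergence of forms. The paper phrases the compactness step as Arzel\`a--Ascoli on locally uniformly bounded quadratic functionals rather than as compactness of conductance vectors, but on a finite vertex set these are equivalent; your remark about $\widetilde{\mathcal D}_n$ possibly having conductances on non-$\stackrel{n}{\sim}$-adjacent pairs is accurate and harmless for all later uses.
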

\begin{proof}
    First notice that combining with Theorem \ref{t:multiplicity of resistance constant constants} and Proposition \ref{p:converse estimate between resistance constants}, the trace theorem Proposition \ref{p:trace theorem} can be rephrased as
    \begin{equation}\label{eq:rephrased trace theorem}
        c_{\mathrm t}^{-1}[\mathscr R_m \mathcal D_m]_{V_n}\leq \mathscr R_n\mathcal D_n \leq c_{\mathrm t}[\mathscr R_m\mathcal D_m]_{V_n},
    \end{equation}
    where the constant $c_{\mathrm t}:= c_{\mathrm r}c_{\mathrm e}c_{\mathrm \dagger}c_{\mathrm{c}}\geq 1$ only depending on $L_{F}$. 

    Then for each fixed $n\geq 0$, the sequence $[\mathscr R_m\mathcal D_m]_{V_n}$ for $m\geq n$ admits a subsequence $m_k$ for $k\geq 0$ of $[\mathscr R_{m}\mathcal D_m]_{V_n}$ locally uniformly converging to a limit functional, denoted as $\widetilde{\mathcal D}_n$, on $l(V_n)$ by Arzel\`a-Ascoli theorem (see \cite[Lemma 4.3]{CGQ} for a similar argument)
    , which satisfies the desired \eqref{eq:decimation energy vs natural energy} by \eqref{eq:rephrased trace theorem}. Here $\widetilde{\mathcal D}_n$ is still a graph energy on $V_n$ by polarization. 
    
    Moreover, we additionally assume that the subsequence $m_k$ is common for all $n\geq 0$ by an extra diagonal argument. The desired \eqref{eq:decimation formula} follows by
    \[\widetilde{\mathcal D}_n = \lim_{k\to\infty}[\mathscr R_{m_k}\mathcal D_{m_k}]_{V_n} = \lim_{k\to\infty}[[\mathscr R_{m_k}\mathcal D_{m_k}]_{V_m}]_{V_n} = [\lim_{k\to\infty}[\mathscr R_{m_k}\mathcal D_{m_k}]_{V_m}]_{V_n} = [\widetilde{\mathcal D}_m]_{V_n}.\]
    This completes the proof.
\end{proof}

We fix a reference measure $\mu$ on the pillow-type carpet $F$ such that every ball charges finite positive measure of $\mu$.

\begin{definition}
    Define a functional $\widetilde{\mathcal E}:C(F)\to [0,\infty)$ and a subspace $\mathcal F\subset C(F)$ by
    \begin{equation}
        \widetilde{\mathcal E}(f) := \lim_{n\to\infty}\widetilde{\mathcal D}_n(f|_{V_n})\quad\textit{and}\quad \mathcal F:= \big\{f\in C(F):\widetilde{\mathcal E}(f) < \infty\big\}.
    \end{equation}
    Moreover, we take polarization of $\widetilde{\mathcal E}$ and write $\widetilde{\mathcal E}_1(f,g) := \widetilde{\mathcal E}(f,g) + (f,g)_{L^2(F,\mu)}$.
\end{definition}
\begin{remark}\label{r:Holder continuity of domain of dirichlet form}
    \begin{enumerate}[(1)]
        \item Denote $V_*:= \bigcup _{n = 0}^{\infty}V_n$, which is a dense subset of $F$. For each $f\in \mathcal F$ and $x,y\in V_n$, by Corollary \ref{c:Holder continuity on graph}, we have
        \begin{equation}\label{eq:Holder continuity on F}
            |f(x) - f(y)|^2 \leq c_{\mathrm{H}}c_{\mathrm D}d_{F}(x,y)^{\theta_{\mathrm H}}\widetilde{\mathcal D}_n(f|_{V_n}) \leq c_{\mathrm{H}}c_{\mathrm D}d_{F}(x,y)^{\theta_{\mathrm H}}\widetilde{\mathcal E}(f).
        \end{equation}
        This implies the continuous embedding $\mathcal F\subset C^{\theta_{\mathrm H}/2}(F)$, the space of $(\theta_{\mathrm H}/2)$-H\"older continuous functions.
        \item If $f_i\in\mathcal F,i\geq 1$ is a sequence such that $\widetilde{\mathcal E}_1(f_i) \leq M$ for some $M > 0$, then the boundedness of $\widetilde{\mathcal E}(f_i)$ implies the uniform continuity of $f_i$ by \eqref{eq:Holder continuity on F}. Combining this with the boundedness of $\|\cdot\|_{L^2(F,\mu)}$, it implies that $f_i$ is uniformly bounded in $C(F)$. Then by Arzel\`a-Ascoli theorem, $f_i$ admits a uniformly convergent subsequence with a continuous function limit.
        \item Recall the extension operator $\mathfrak{E}_{n,m}:l(V_n)\to l(V_m)$ in Proposition \ref{p:trace theorem}, which satisfies \eqref{eq:local perturbation} and
        \begin{equation}
            \widetilde{\mathcal D}_m(\mathfrak E_{n,m}f) \leq c_{\mathfrak E}\widetilde{\mathcal D}_{n}(f)
        \end{equation}
        for all $f\in l(V_n)$, where $c_{\mathfrak E} := c_{\mathrm e}c_{\mathrm c}c_{\dagger}c_{\mathrm D}^2 \geq 1$ only depending on $L_{F}$. As an advantage of $\mathfrak{E}_{n,m}$, the condition \eqref{eq:local perturbation} may not be satisfied by the minimizer of $[\widetilde{\mathcal D}_{m}]_{V_n}(f)$ in $l(V_m)$.
    \end{enumerate}
\end{remark}

Indeed, $(V_n,\widetilde{\mathcal D}_n)$ gives a compatible sequence in \cite[Definition 2.2.1]{Kig01}, thus the following lemma mainly follows by \cite[Theorem 2.2.6, Theorem 2.4.1, Theorem 3.3.4]{Kig01}. For the convenience of readers, we provide a self-contained proof.

\begin{lemma}\label{l:properties of closed form}
    $(\widetilde{\mathcal E},\mathcal F)$ is a  regular Dirichlet form on $L^2(F,\mu)$, which satisfies 
    \begin{equation}\label{eq:quasi-self-similarity}
        c_{\mathrm{qs}}^{-1}\sum_{w\in W_{n}}r_{\nu}^n\widetilde{\mathcal E}(f\circ \Psi_w) \leq \widetilde{\mathcal E}(f) \leq c_{\mathrm{qs}}\sum_{w\in W_{n}}r_{\nu}^n\widetilde{\mathcal E}(f\circ \Psi_w)
    \end{equation}
    for all $n\geq 0$ as a quasi-self-similarity, where $r_{\nu}$ is given by \eqref{eq:fekete's lemma} and $c_{\mathrm{qs}}\geq 1$ only depending on $L_{F}$. Moreover, $(\mathcal F,\widetilde{\mathcal E}_1)$ is a separable Hilbert space.
\end{lemma}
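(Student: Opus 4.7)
The plan is to verify each of the three claims — regular Dirichlet form, quasi-self-similarity \eqref{eq:quasi-self-similarity}, and Hilbert space structure — by exploiting the compatible-sequence framework $(V_n,\widetilde{\mathcal D}_n)$ from Lemma \ref{l:decimation graph energy} together with the H\"older embedding of Remark \ref{r:Holder continuity of domain of dirichlet form}(1).

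First I would establish the quasi-self-similarity, which is the most mechanical step. Applying the $(m,n)$-reduction \eqref{eq:reduction of graph energy} to $\mathcal D_m(f|_{V_m})$, multiplying by $\mathscr R_m$, invoking the two-sided comparison \eqref{eq:decimation energy vs natural energy} between $\widetilde{\mathcal D}_k$ and $\mathscr R_k\mathcal D_k$ at both levels $m$ and $m-n$, and using $\mathscr R_m/\mathscr R_{m-n}\asymp r_\nu^n$ from \eqref{eq:fekete's lemma}, one obtains
\[
\widetilde{\mathcal D}_m(f|_{V_m}) \asymp \sum_{w\in W_n} r_\nu^n\,\widetilde{\mathcal D}_{m-n}(f\circ \Psi_w|_{V_{m-n}}).
\]
Passing $m\to\infty$ along a diagonal refinement of the subsequence in Lemma \ref{l:decimation graph energy} (applied simultaneously on $F$ and on each $w$-cell) delivers \eqref{eq:quasi-self-similarity}.

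For closedness, I would take a $\widetilde{\mathcal E}_1$-Cauchy sequence $\{f_i\}\subset\mathcal F$. By Remark \ref{r:Holder continuity of domain of dirichlet form}(2) a subsequence converges uniformly to some $f\in C(F)$, which coincides with the $L^2$-limit. Each $\widetilde{\mathcal D}_n$ depends only on the finite vector $f|_{V_n}$, so pointwise convergence on $V_n$ gives $\widetilde{\mathcal D}_n(f|_{V_n}) = \lim_i \widetilde{\mathcal D}_n(f_i|_{V_n}) \le \sup_i \widetilde{\mathcal E}(f_i)$; supping over $n$ puts $f\in\mathcal F$, and the same reasoning applied to $f_i-f_j$ gives $\widetilde{\mathcal E}_1(f-f_i)\to 0$. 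This simultaneously yields closedness on $L^2(F,\mu)$ and the Hilbert-space assertion (via polarization of $\widetilde{\mathcal E}$). The Markov property reduces to the observation that each $\widetilde{\mathcal D}_n$ is a graph energy with nonnegative coefficients, inherited as a trace of the positive quadratic form $\mathscr R_m\mathcal D_m$ via \eqref{eq:decimation formula}, so the normal contraction $t\mapsto (t\wedge 1)\vee 0$ decreases every $\widetilde{\mathcal D}_n$ and hence $\widetilde{\mathcal E}$.

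Regularity is the crux. Given $u\in C(F)$ and $\varepsilon>0$, uniform continuity yields $n$ such that $\mathrm{osc}_{F_w}u<\varepsilon$ for every $w\in W_n$. Applying the extension operator $\mathfrak E_{n,m}$ of Proposition \ref{p:trace theorem}(2) to $u|_{V_n}$, the local min/max preservation \eqref{eq:local perturbation} forces $\|\mathfrak E_{n,m}(u|_{V_n})|_{V_m}-u|_{V_m}\|_\infty<2\varepsilon$ for every $m\ge n$; combined with the uniform H\"older estimate \eqref{eq:Holder continuity on F} (whose constant is independent of $m$ because of \eqref{eq:fekete's lemma}), the sequence $m\mapsto \mathfrak E_{n,m}(u|_{V_n})$ has a uniformly convergent subsequence with limit $v\in\mathcal F$, $\|v-u\|_\infty\le 2\varepsilon$. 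This produces approximations in $\mathcal F\cap C(F)$ dense in $C(F)$ and hence in $L^2(F,\mu)$, giving regularity. Separability of $(\mathcal F,\widetilde{\mathcal E}_1)$ follows because the countable family obtained by such extensions from rational data on $\bigcup_n V_n$ is $\widetilde{\mathcal E}_1$-dense: for any $f\in\mathcal F$, the minimizer of $\widetilde{\mathcal E}$ with trace $f|_{V_n}$ (which exists and is unique by closedness plus strict convexity) has energy $\widetilde{\mathcal D}_n(f|_{V_n})\nearrow \widetilde{\mathcal E}(f)$ by \eqref{eq:decimation formula}, so it is $\widetilde{\mathcal E}_1$-close to $f$; then approximate its boundary values by rationals.

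The main obstacle is the regularity step: one must control simultaneously the uniform norm and the energy of the approximants. Property \eqref{eq:local perturbation} of $\mathfrak E_{n,m}$ is essential for the uniform estimate, while the energy bound \eqref{eq:extension theorem} combined with \eqref{eq:fekete's lemma} delivers the uniform $\widetilde{\mathcal E}$-control needed both for the compactness argument and for the density in $(\mathcal F,\widetilde{\mathcal E}_1)$.
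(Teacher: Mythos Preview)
Your proposal is correct and matches the paper's approach in every essential step: closedness via Remark~\ref{r:Holder continuity of domain of dirichlet form}(2) and pointwise convergence on each finite $V_n$, regularity via the extension operators $\mathfrak E_{n,m}$ together with the oscillation control \eqref{eq:local perturbation} and an Arzel\`a--Ascoli argument, quasi-self-similarity via \eqref{eq:reduction of graph energy}, \eqref{eq:decimation energy vs natural energy} and \eqref{eq:fekete's lemma}, and separability via harmonic extensions $\mathfrak H_n$ of countable boundary data. The only differences are cosmetic: the paper inserts an explicit McShane extension to promote each $\mathfrak E_{n,m}(u|_{V_n})\in l(V_m)$ to a genuine function on $F$ before invoking Arzel\`a--Ascoli (a step you leave implicit), and your diagonal-subsequence remark in the quasi-self-similarity step is unnecessary since the monotone limits defining $\widetilde{\mathcal E}$ already exist once the compatible sequence $\widetilde{\mathcal D}_n$ is fixed.
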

\begin{proof}
    First, we prove the closed property of $(\widetilde{\mathcal E},\mathcal F)$. Take a $\widetilde{\mathcal E}_1$-Cauchy sequence $f_i\in \mathcal F$. It admits $f\in L^2(F,\mu)$ such that $f_i$ converges weakly to $f$ in $L^2(F,\mu)$. Moreover, by Remark \ref{r:Holder continuity of domain of dirichlet form} (2), any subsequence of $f_i$ admits a sub-subsequence uniformly converging to a continuous version of $f$, which implies $f\in C(F)$ and $f_i$ uniformly converges to $f$. It remains to check that $\lim_{i\to\infty}\widetilde{\mathcal E}(f_i - f) = 0$, which indeed follows by
    \[\begin{aligned}
    \limsup_{i\to\infty}\widetilde{\mathcal E}(f_i - f) 
    &= \limsup_{i\to\infty}\sup_{n\geq 0}\widetilde{\mathcal D}_n(f_i|_{V_n} - f|_{V_n}) = \limsup_{i\to\infty}\sup_{n\geq 0}\lim_{j\to\infty}\widetilde{\mathcal D}_n(f_i|_{V_n} - f_j|_{V_n}) \\
    & \leq \limsup_{i\to\infty}\sup_{n\geq 0}\limsup_{j\to\infty}\widetilde{\mathcal E}(f_i - f_j) = \limsup_{i\to\infty}\limsup_{j\to\infty}\widetilde{\mathcal E}(f_i - f_j) = 0,
    \end{aligned}\]
    where the second equation follows by that $f_i(x)$ converges to $f(x)$ for all $x\in V_*$. 

    Next, we prove that $\mathcal F$ is dense in $C(F)$. Take $f\in C(F)$ and denote
    \[\omega_{n}(f):=\sup\{|f(x) - f(y)|:x,y\in F_{w}\textit{ for some }w\in W_n\},\]
    which satisfies $\lim_{n\to\infty}\omega_n(f) = 0$. For each $0 \leq n \leq m$ and fixed $f\in C(F)$, define the Mcshane extension of $\mathfrak{E}_{n,m}(f|_{V_n})\in l(V_m)$ by
    \[f_{n,m}(x) := \inf_{y\in V_m}\big\{\mathfrak{E}_{n,m}(f|_{V_n})(y) + c^{1/2}_{
    \mathrm H}c_{\mathrm D}^{1/2}d_{F}(x,y)^{\theta_{\mathrm H}/2}\widetilde{\mathcal D}_m(\mathfrak{E}_{n,m}(f|_{V_n}))^{1/2}\big\}\]
    for $x\in F$, which satisfies $f_{n,m}|_{V_m} = \mathfrak{E}_{n,m}(f|_{V_n})$ by \eqref{eq:Holder continuity on F} and 
    \[\begin{aligned}
        |f_{n,m}(x) - f_{n,m}(y)| 
        & \leq \sup_{z\in V_m}c^{1/2}_{\mathrm H}c_{\mathrm D}^{1/2}(d_{F}(x,z)^{\theta_{\mathrm H}/2} - d_{F}(y,z)^{\theta_{H}/2})\widetilde{\mathcal D}_m(\mathfrak{E}_{n,m}(f|_{V_n}))^{1/2}\\
        & \leq c_{\mathrm H}^{1/2}c_{\mathrm D}^{1/2}c_{\mathfrak E}^{1/2}d_{F}(x,y)^{\theta_{\mathrm{H}}/2}\widetilde{\mathcal D}_n(f|_{V_n})^{1/2}
    \end{aligned}\]
    for all $x,y\in F$ and $m\geq n$, where the second inequality follows by Remark \ref{r:Holder continuity of domain of dirichlet form} (3). Then by Arzela-Ascoli theorem, there exists a subsequence of $f_{n,m}$ for $m\geq n$ uniformly converging to a continuous function limit, denoted as $f_{n,m_{k}}\to f_{n}\in C(F)$. Moreover, we have
    \[\begin{aligned}
        \widetilde{\mathcal E}(f_n) & = \lim_{m\to\infty}\widetilde{\mathcal D}_m(f_n|_{V_m}) = \lim_{m\to\infty}\lim_{k\to\infty}\widetilde{\mathcal D}_m(f_{n,m_k}|_{V_m}) = \lim_{m\to\infty}\lim_{k\to\infty}\widetilde{\mathcal D}_m(\mathfrak{E}_{n,m_k}(f|_{V_n})|_{V_m}) \\
        & \leq \lim_{m\to\infty}\lim_{k\to\infty}\widetilde{\mathcal D}_{m_k}(\mathfrak{E}_{n,m_k}(f|_{V_n})) = \lim_{k\to\infty}\widetilde{\mathcal D}_{m_k}(\mathfrak{E}_{n,m_k}(f|_{V_n})) \leq  c_{\mathfrak E}\widetilde{\mathcal D}_{n}(f|_{V_n}) < \infty,
    \end{aligned}\]
    which implies $f_n\in\mathcal F$. It remains to prove $f_n$ uniformly converges to $f$. 
    
    Notice that $f_n|_{V_n} = f$ for all $n\geq 0$. Recall that by \eqref{eq:local perturbation}, for each $w\in W_n$ and $x\in \Psi_{w}(V_*)$, we have 
    \[\begin{aligned}
        |f_n(x) - f_n(\Psi_w(p_0))| 
        & = \lim_{k\to\infty}|f_{n,m_k}(x) - f_{n,m_k}(\Psi_w(p_0))| \\
        & = \lim_{k\to\infty}|\mathfrak{E}_{n,m_k}(f|_{V_n})(x) - \mathfrak{E}_{n,m_k}(f|_{V_n})(\Psi_w(p_0))| \\
        & \leq \max_{1 \leq i \leq 3}|f(\Psi_w(p_i)) - f(\Psi_{w}(p_0))| \leq \omega_n(f),
    \end{aligned}\]
    which further implies
    \[\begin{aligned}
        |f_n(x) - f(x)| \leq |f_n(x) - f_n(\Psi_w(p_0))| + |f(\Psi_w(p_0)) - f(x)| \leq 2\omega_n(f),
    \end{aligned}\]
    thus $f_n$ uniformly converges to $f$.

    Till now, we have seen that $(\widetilde{\mathcal E},\mathcal F)$ is a densely defined, closed, regular symmetric form on $L^2(F,\mu)$, where the densely defined property follows by that $C(F)$ is dense in $L^2(F,\mu)$. Moreover, $(\widetilde{\mathcal E},\mathcal F)$ also satisfies the Markovian property, which follows by the corresponding property of the graph energy sequence $\widetilde{\mathcal D}_n$. Thus $(\widetilde{\mathcal E},\mathcal F)$ is a regular Dirichlet form.

    The quasi-self-similarity \eqref{eq:quasi-self-similarity} follows by combining Lemma \ref{l:decimation graph energy}, \eqref{eq:fekete's lemma} and the $(m,n)$-reduction of graph energy \eqref{eq:reduction of graph energy} for all $m\geq n\geq 0$.

    To prove that $(\mathcal F,\widetilde{\mathcal E}_1)$ is separable, take a countable dense subset $E_n\subset l(V_n)$ for each $n\geq 0$ and let
    \begin{equation}
        E:=\bigcup_{n\geq 0}\big\{\mathfrak H_{n}f\in\mathcal F:f\in E_n\big\},
    \end{equation}
    where $\mathfrak{H}_n f$ is the minimizer of $\min\big\{\widetilde{\mathcal E}(g):g\in\mathcal F,g|_{V_n} = f\big\}$. Then $E$ gives a countable dense subset of $(\mathcal F,\widetilde{\mathcal E}_1)$.
\end{proof}

\begin{theorem}\label{t:self-similar dirichlet form}
        There is a strongly local regular Dirichlet form $(\mathcal E,\mathcal F)$ on $L^2(F,\mu)$ satisfying the self-similarity that
        \begin{equation}\label{eq:self-similarity of domain}
            \mathcal F = \big\{f\in C(F):f\circ\Psi_{w}\in\mathcal F, w\in W_1\big\},
        \end{equation}
        and for $f\in \mathcal F$,
        \begin{equation}\label{eq:self-similarity of functional}
            \mathcal E(f) = r_{\nu}\sum_{w\in W_1}\mathcal E(f\circ\Psi_w).
        \end{equation}
        Moreover, there exists a constant $c_{\mathrm S} \geq 1$ only depending on $L_{F}$ such that
        \begin{equation}\label{eq:compatible between self-similar form and graph energy sequence}
            c^{-1}_{\mathrm S}\lim_{n\to\infty}\widetilde{\mathcal D}_n(f|_{V_n}) \leq \mathcal E(f)\leq c_{\mathrm S}\lim_{n\to\infty}\widetilde{\mathcal D}_n(f|_{V_n}).
        \end{equation} 
\end{theorem}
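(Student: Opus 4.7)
\noindent\emph{Proof plan.} The approach is to realize $\mathcal{E}$ as a shift-invariant averaging of the renormalized pull-back forms built from $\widetilde{\mathcal{E}}$. For each $k\geq 0$, define
\begin{equation*}
    \mathcal{E}^{(k)}(f) := r_{\nu}^{k}\sum_{w\in W_k}\widetilde{\mathcal{E}}(f\circ\Psi_w),\qquad f\in\mathcal{F},
\end{equation*}
noting that $f\circ\Psi_w\in\mathcal{F}$ by \eqref{eq:quasi-self-similarity}. Applying \eqref{eq:quasi-self-similarity} with $n=k$ yields $c_{\mathrm{qs}}^{-1}\mathcal{E}^{(k)}(f)\leq \widetilde{\mathcal{E}}(f)\leq c_{\mathrm{qs}}\mathcal{E}^{(k)}(f)$ uniformly in $k$. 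After polarization, the Cauchy--Schwarz inequality gives $|\mathcal{E}^{(k)}(f,g)|\leq c_{\mathrm{qs}}\widetilde{\mathcal{E}}(f)^{1/2}\widetilde{\mathcal{E}}(g)^{1/2}$, so $(\mathcal{E}^{(k)}(f,g))_{k\geq 0}\in\ell^{\infty}(\mathbb{Z}_+)$ for each fixed pair $f,g\in\mathcal{F}$. Fixing a Banach limit $\mathrm{LIM}$ on $\ell^{\infty}(\mathbb{Z}_+)$, I set $\mathcal{E}(f,g):=\mathrm{LIM}_k\mathcal{E}^{(k)}(f,g)$; linearity and positivity of $\mathrm{LIM}$ make $\mathcal{E}$ a nonnegative symmetric bilinear form on $\mathcal{F}$ with $c_{\mathrm{qs}}^{-1}\widetilde{\mathcal{E}}(f)\leq\mathcal{E}(f)\leq c_{\mathrm{qs}}\widetilde{\mathcal{E}}(f)$, which is \eqref{eq:compatible between self-similar form and graph energy sequence} with $c_{\mathrm{S}}=c_{\mathrm{qs}}$ since $\widetilde{\mathcal{E}}(f)=\lim_{n\to\infty}\widetilde{\mathcal{D}}_n(f|_{V_n})$.

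The self-similarity \eqref{eq:self-similarity of functional} is forced by the shift-invariance $\mathrm{LIM}_k a_k=\mathrm{LIM}_k a_{k+1}$: using $\Psi_w\circ\Psi_v=\Psi_{wv}$ to identify $W_1\times W_k\simeq W_{k+1}$,
\begin{equation*}
    r_{\nu}\sum_{w\in W_1}\mathcal{E}(f\circ\Psi_w)=\mathrm{LIM}_k\Bigl[r_{\nu}^{k+1}\sum_{u\in W_{k+1}}\widetilde{\mathcal{E}}(f\circ\Psi_u)\Bigr]=\mathrm{LIM}_k\mathcal{E}^{(k+1)}(f)=\mathcal{E}(f).
\end{equation*}
The domain identity \eqref{eq:self-similarity of domain} is a direct reading of \eqref{eq:quasi-self-similarity} at $n=1$: if $f\in\mathcal{F}$ then each $\widetilde{\mathcal{E}}(f\circ\Psi_w)\leq c_{\mathrm{qs}}r_{\nu}^{-1}\widetilde{\mathcal{E}}(f)<\infty$, and conversely if $f\in C(F)$ satisfies $f\circ\Psi_w\in\mathcal{F}$ for every $w\in W_1$, then $\widetilde{\mathcal{E}}(f)\leq c_{\mathrm{qs}}r_{\nu}\sum_{w\in W_1}\widetilde{\mathcal{E}}(f\circ\Psi_w)<\infty$.

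The remaining Dirichlet form axioms transfer from Lemma \ref{l:properties of closed form}. The norm equivalence $\mathcal{E}_1\asymp\widetilde{\mathcal{E}}_1$ carries over completeness (hence closedness), and because $\mathcal{F}$ is unchanged, density in $C(F)$ and in $L^2(F,\mu)$ persists, giving regularity. The Markov property passes through the Banach limit: for $g=(f\vee 0)\wedge 1$, each $g\circ\Psi_w$ is a normal contraction of $f\circ\Psi_w$, so $\widetilde{\mathcal{E}}(g\circ\Psi_w)\leq\widetilde{\mathcal{E}}(f\circ\Psi_w)$, whence $\mathcal{E}^{(k)}(g)\leq\mathcal{E}^{(k)}(f)$ for all $k$, and $\mathrm{LIM}$ preserves the inequality.

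The main obstacle I anticipate is verifying strong locality. I argue it first for $\widetilde{\mathcal{E}}$: given $u,v\in\mathcal{F}$ with $v$ constant on an open neighborhood $U\supset\mathrm{supp}(u)$, set $\delta:=\mathrm{dist}(\mathrm{supp}(u),F\setminus U)>0$ (positive by compactness of $F$) and fix $n_0$ with $L_F^{-n_0}<\delta$. For each $n\geq n_0$ and every edge $x\stackrel{n}{\sim}y$ of $V_n$ (which has $d_F(x,y)=L_F^{-n}$ by the discussion preceding Corollary \ref{c:Holder continuity on graph}), the product $\bigl(u(x)-u(y)\bigr)\bigl(v(x)-v(y)\bigr)$ vanishes: either both endpoints lie in $U$, in which case $v(x)=v(y)$, or at least one lies in $F\setminus U$, in which case neither endpoint can lie in $\mathrm{supp}(u)$ (distance $L_F^{-n}<\delta$), so $u(x)=u(y)=0$. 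Hence $\widetilde{\mathcal{D}}_n(u|_{V_n},v|_{V_n})=0$ eventually, giving $\widetilde{\mathcal{E}}(u,v)=0$. Applying the same argument to $u\circ\Psi_w$ and $v\circ\Psi_w$ for each $w\in W_k$, noting that $\Psi_w^{-1}(U)$ is open and contains $\mathrm{supp}(u\circ\Psi_w)$, yields $\mathcal{E}^{(k)}(u,v)=0$ for every $k$, and the Banach limit delivers $\mathcal{E}(u,v)=0$.
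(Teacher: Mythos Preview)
Your Banach-limit construction is a legitimate alternative to the paper's route, and in some respects tidier. The paper defines the same sequence $\mathcal{E}^{(k)}$ implicitly, but instead of a Banach limit it takes Ces\`aro means $\frac{1}{m}\sum_{n=1}^{m}\mathcal{E}^{(n)}(f)$ and, using the separability of $(\mathcal{F},\widetilde{\mathcal{E}}_1)$, extracts a subsequence $m_k$ along which these converge on a countable $\Psi$-stable dense set $E_\infty$, then extends by continuity; self-similarity is obtained from a telescoping estimate showing the difference $\mathcal{E}(f)-r_\nu\sum_{w\in W_1}\mathcal{E}(f\circ\Psi_w)$ is $O(1/m_k)$. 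Your shift-invariance of $\mathrm{LIM}$ replaces both the subsequence extraction and the telescoping in one stroke. The remaining transfers (closedness, regularity, Markov property, \eqref{eq:self-similarity of domain}) are handled identically in both arguments via the two-sided comparison $c_{\mathrm{qs}}^{-1}\widetilde{\mathcal{E}}\le\mathcal{E}\le c_{\mathrm{qs}}\widetilde{\mathcal{E}}$.

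There is, however, a genuine gap in your strong locality step. You argue that $\widetilde{\mathcal{D}}_n(u|_{V_n},v|_{V_n})=0$ for large $n$ by checking that $(u(x)-u(y))(v(x)-v(y))=0$ on every \emph{edge} $x\stackrel{n}{\sim}y$. This presumes $\widetilde{\mathcal{D}}_n$ is supported on nearest-neighbor pairs, but $\widetilde{\mathcal{D}}_n$ is a limit of traces $[\mathscr{R}_{m_k}\mathcal{D}_{m_k}]_{V_n}$, and the trace of a nearest-neighbor energy on $V_m$ to $V_n$ generically carries nonzero conductances between \emph{non-adjacent} vertices of $V_n$ (for instance between diagonally opposite corners of a single $n$-cell). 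The paper's phrase ``graph energy'' for $\widetilde{\mathcal{D}}_n$ is loose on this point, but the paper never uses nearest-neighbor structure of $\widetilde{\mathcal{D}}_n$; its strong locality argument instead invokes the already-proved identity \eqref{eq:self-similarity of functional}: choose $n$ so large that on every cell $F_w$, $w\in W_n$, one of $f|_{F_w}$, $g|_{F_w}$ is constant, and then $\mathcal{E}(f,g)=r_\nu^{\,n}\sum_{w\in W_n}\mathcal{E}(f\circ\Psi_w,g\circ\Psi_w)=0$ termwise. Your framework admits the same repair without passing through $\widetilde{\mathcal{D}}_n$: for $k$ with $L_F^{-k}<\delta$, every $w\in W_k$ has either $u\circ\Psi_w\equiv 0$ or $v\circ\Psi_w$ constant, hence $\widetilde{\mathcal{E}}(u\circ\Psi_w,v\circ\Psi_w)=0$ and $\mathcal{E}^{(k)}(u,v)=0$; since this holds for all sufficiently large $k$, the Banach limit (which agrees with the ordinary limit when the latter exists) gives $\mathcal{E}(u,v)=0$.
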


\begin{proof}
First, we prove \eqref{eq:self-similarity of domain}. For each $f\in C(F)$, by \eqref{eq:quasi-self-similarity} in Lemma \ref{l:properties of closed form}, we have
    \[f\in\mathcal F\Leftrightarrow \widetilde{\mathcal E}(f) < \infty\Leftrightarrow \sup_{w\in W_1}\widetilde{\mathcal E}(f\circ \Psi_{w}) <\infty\Leftrightarrow f\circ\Psi_{w}\in\mathcal F\textit{ for all }w\in W_1.\]

Next, we construct the functional $\mathcal E$ satisfying \eqref{eq:compatible between self-similar form and graph energy sequence}. Since $(\mathcal F,\widetilde{\mathcal E}_1)$ is separable, take a countable dense subset $E\subset \mathcal F$ and expand $E$ to
\[E_{\infty}:= \big\{f\circ\Psi_{w}\in\mathcal F:f\in E,w\in W_n\textit{ for some }n\geq 0\big\},\]
which is a dense countable subset of $(\mathcal F,\widetilde{\mathcal E}_1)$ satisfying that $f\in E_{\infty}$ implies $f\circ\Psi_{w}\in E_{\infty}$ for all $w\in W_n,n\geq 0$. Then by \eqref{eq:quasi-self-similarity}, there exists a sequence $m_k$ such that
\[\mathcal E(f) := \lim_{k\to\infty}\frac{1}{m_k}\sum_{n = 1}^{m_k}\sum_{w\in W_{n}}r_{\nu}^{n}\widetilde{\mathcal E}(f\circ\Psi_{w})\]
converges for all $f\in E_{\infty}$, which satisfies
\begin{equation}\label{eq:comparing of self-similar form and quasi-self-similar closed form}
    c_{\mathrm{qs}}^{-1}\widetilde{\mathcal E}(f) \leq \mathcal E(f) \leq c_{\mathrm{qs}}\widetilde{\mathcal E}(f)
\end{equation}
for all $f\in E_{\infty}$, thus admitting a unique continuous extension on $(\mathcal F,\widetilde{\mathcal E}_1)$, still denoted as $\mathcal E$, which further satisfies \eqref{eq:comparing of self-similar form and quasi-self-similar closed form} for all $f\in\mathcal F$. Then $(\mathcal E,\mathcal F)$ is a regular Dirichlet form on $L^2(F,\mu)$, by the Markovian property of each $\widetilde{\mathcal E}(f\circ\Psi_{w})$ for $w\in W_n,n\geq 0$. The desired \eqref{eq:compatible between self-similar form and graph energy sequence} follows by \eqref{eq:comparing of self-similar form and quasi-self-similar closed form}.

Next, we prove \eqref{eq:self-similarity of functional}. It is enough to prove \eqref{eq:self-similarity of functional} for $f\in E_{\infty}$, which follows by
\[\begin{aligned}
    \bigg|\mathcal E(f) - \sum_{w\in W_1}r_{\nu}\mathcal E(f\circ\Psi_{w})\bigg| 
    & = \lim_{k\to\infty}\bigg|\frac{1}{m_k}\sum_{n = 1}^{m_k}\sum_{w\in W_{n}}r_{\nu}^n\widetilde{\mathcal E}(f\circ\Psi_w) - \frac{1}{m_k}\sum_{n = 2}^{m_{k}+1}\sum_{w\in W_{n}}r_{\nu}^n\widetilde{\mathcal E}(f\circ\Psi_w)\bigg|\\
    & = \lim_{k\to\infty}\frac{1}{m_k}\bigg|\sum_{w\in W_1}r_{\nu}\widetilde{\mathcal E}(f\circ\Psi_w) - \sum_{w\in W_{m_k + 1}}r_{\nu}^{m_k + 1}\widetilde{\mathcal E}(f\circ\Psi_w)\bigg|\\
    & \leq \lim_{k\to\infty}2c_{\mathrm{qs}}\widetilde{\mathcal E}(f)/m_k= 0.
\end{aligned}\]

Finally, for $f,g\in \mcF$ so that $f$ is a constant on a neighbor of the support of $g$, we can find $n$ large enough so that either $f|_{F_w}$ or $g|_{F_w}$ is a constant for every $w\in W_n$. Then, by  \eqref{eq:self-similarity of functional}
\[
\mcE(f,g)=r_{\nu}^n\sum_{w\in W_n}\mcE(f\circ\Psi_w,g\circ\Psi_w)=0.
\]
This proves the strongly local property.
\end{proof}

\section*{Acknowledgments}
We are grateful to Mathav Murugan for introducing us the model of pillow space. \bibliographystyle{amsplain}

\end{document}